\def\w*lim{\mathop{\mbox{\textup{w*-lim}}}}
\newtheorem{theorem}{\sc \textbf{Theorem}}[section]
\newtheorem{cor}[theorem]{\sc \textbf{Corollary}}
\newtheorem{prop}[theorem]{\sc \textbf{Proposition}}
\newtheorem{rmk}[theorem]{\sc \textbf{Remark}}
\newtheorem{thm}[theorem]{\sc \textbf{Theorem}}
\newtheorem{lem}[theorem]{\sc \textbf{Lemma}}
\newcounter{cnt1}
\newcounter{cnt2}
\newcounter{cnt3}
\newcounter{cnt4}
\newcommand{\blr}{\begin{list}{$($\roman{cnt1}$)$} {\usecounter{cnt1}
 \setlength{\topsep}{0pt} \setlength{\itemsep}{0pt}}}
\newcommand{\blR}{\begin{list}{\Roman{cnt4}.\ } {\usecounter{cnt4}
 \setlength{\topsep}{0pt} \setlength{\itemsep}{0pt}}}
\newcommand{\bla}{\begin{list}{$($\alph{cnt2}$)$} {\usecounter{cnt2}
 \setlength{\topsep}{0pt} \setlength{\itemsep}{0pt}}}
\newcommand{\bln}{\begin{list}{$($\arabic{cnt3}$)$} {\usecounter{cnt3}
 \setlength{\topsep}{0pt} \setlength{\itemsep}{0pt}}}
\newcommand{\el}{\end{list}}
\newcommand{\cA}{{\mathcal A}}
\newcommand{\cM}{{\mathcal M}}
\newcommand{\cN}{{\mathcal N}}
\newcommand{\cP}{{\mathcal P}}
\newcommand{\cU}{{\mathcal U}}
\newcommand{\extr}{{\rm extr}}
\begin{document}

\title[Resolution of a problem by Luxemburg]{Extreme points of the set of elements majorised by an integrable function: Resolution of a problem by Luxemburg and of its noncommutative counterpart
}

 \author{D. Dauitbek, J. Huang and F. Sukochev}


\classno{46L51; 46L10; 46E30. \hfill Version~: \today}

\extraline{\textbf{Keywords}: spectral scales; probability spaces; extreme points;  majorisation;  noncommutative $L_1$-space; finite von Neumann algebras.
}

\maketitle
	{\centering\footnotesize Dedicated to the memory of Professor Wilhelmus Anthonius Josephus Luxemburg (1929--2018)\par}


\begin{abstract}
Let $f$ be an arbitrary integrable function on a finite measure space $(X,\Sigma, \nu)$.
We characterise the extreme points of  the set $\Omega (f)$
of all measurable   functions on $(X,\Sigma, \nu)$ majorised by $f$,  providing a complete answer to a problem raised by W.A.J. Luxemburg in 1967.
Moreover, we obtain a noncommutative version of this result.
\end{abstract}

\section{Introduction}
In 1967, W.A.J. Luxemburg  raised  the following question (see \cite[Problem 1]{Luxemburg}):
\begin{quote}
  Determine all the extreme points of $\Omega(f)$,  $f\in L_1(X,\Sigma, \nu)$,  for an arbitrary finite measure space $(X,\Sigma, \nu)$,
  where $\Omega (f)$ is the set of all integrable  functions on $(X,\Sigma, \nu)$ majorised by $f$ in the sense of Hardy--Littlewood--P\'{o}lya.
\end{quote}
The case when $X=\{1,2,\cdots,n \}$ with counting measure  is well-known.
Let $\prec$ be the partial order of Hardy, Littlewood and P\'{o}lya for real $n$-vectors.
It is known that $ y\prec x$ if and only if $y$ belongs to the convex hull $\Omega(x)$ of the set of permutations of $x$ \cite{HLP}, i.e., the convex hull of $\{ Px : P \mbox{ is  a permutation matrix}\}$.
Moreover, the extreme points of $\Omega(x)$ are precisely the permutations of $x$ (see e.g. \cite{HLP}).
If  $(X,\Sigma,\nu)$ is an arbitrary finite measure space and $f,g \in L_1(X)$,
%
then $g$ is called \emph{majorised} by $f$ in the sense of Hardy--Littlewood--P\'{o}lya
(denoted by $g\prec f$) provided that  $\int_0^s\lambda(t;g)dt\leqslant\int_0^s\lambda(t;f)dt$
for all $s\in[0,1)$ and
$\int_0^1\lambda(t;g)dt=\int_0^1\lambda(t;f)dt,$ where $\lambda(f)$ is the  right-continuous equimeasurable nonincreasing rearrangement of $f$ (see \cite{C,Ryff63} for the definition, see also Section~\ref{Preliminaries} below).
In the particular case when $(X,\Sigma, \nu)$ is atomless,
the set $\Omega (f):= \{g\in L_1(X): g \prec f\}$ is said to be  the orbit of $f$ (with respect to doubly stochastic operators) \cite{Ryff68,R,Ryff63}.
It was first proved by    Ryff \cite{R} that
  if $f,g \in L_1(0,1)$ and $g\sim f$ (i.e. $g\prec f$ and $f\prec g$), then $g$ is an extreme point of $\Omega(f)$.
In 1967, Luxemburg \cite{Luxemburg} extended the result by Ryff \cite{R} to the setting of an arbitrary  atomless  finite measure space  $(X,\Sigma, \nu)$.
However,  the converse implication  was left unresolved  in \cite{R} and \cite{Luxemburg},
and  was
later treated  by Ryff \cite{Ryff} (see also \cite{Ryff68}), who proved  the following characterisation.
\begin{quote}
Let  $(X,\Sigma, \nu)$ be a atomless  finite measure space. Let $f=L_1(X,\Sigma, \nu)$ and $g\in \Omega (f)$.
Then, $g$ is an extreme point of $\Omega (f)$ if and only
$g$ is equimeasurable with $f$ (i.e. $g\sim f$).
\end{quote}
However, the case of arbitrary finite measure spaces (e.g. $L_1(0,\frac 12)\oplus \mathbb{C}e$) seems to have been left open and this case cannot be obtained as a corollary from the atomless case  (see Remark \ref{R:Ex}).
The main object of the present paper is to provide a complete answer to Luxemburg's  question.
Moreover, we consider this question in a  much more general setting, giving a characterisation
for the extreme  points of the set $\Omega(y)$ of all self-adjoint  operators majorised by a self-adjoint operator $y$ in the  noncommutative $L_1$-space affiliated to
 a finite von Neumann algebra
 (see e.g. \cite{Hiai,HN1,SZ2} for related partial results in the noncommutative setting).  

Let $\mathcal{M}$ be a  von Neumann algebra
equipped with a faithful normal finite trace $\tau$ and $L_1(\mathcal{M},\tau)_h$
(resp. $L_1(\mathcal{M},\tau)_+$) be the set of all self-adjoint (resp. positive)
 operators in the noncommutative $L_1$-space $L_1(\cM,\tau)$.
 Petz \cite{P} introduced the spectral scale $\lambda (x)$ of a $\tau$-measurable self-adjoint operator $x$.
 In the special case when $\cM$ is commutative (and hence, the pair $(\cM,\tau)$ can be identified with $L_\infty (X,\Sigma,\nu)$),   the
 the notion of spectral scales coincides with the non-increasing rearrangements.
For detailed discussions of this notion, 
 we refer to \cite{HN1} (see also \cite{Hiai,DPS,DDP}).
Theorem \ref{RyffNonCom} below  is the main result of the present paper, which unifies Ryff's theorem  \cite{Ryff,Ryff68} and the classic result for vectors  \cite{HLP} with significant extension.
The following theorem yields the complete resolution of Luxemburg's problem in the general setting.

%
%
%

\begin{thm}\label{RyffNonCom}
Assume that $\cM$ is a von Neumann algebra equipped with a faithful normal tracial state $\tau$.
Let $y\in L_1(\cM,\tau)_h$ and let $\Omega(y) $  be   defined as the set of all self-adjoint  operators $x\in L_1(\cM,\tau)$ satisfying  $\lambda (x) \prec \lambda (y) .$
Then, $x$ is an extreme point if and only if for each  $t\in (0,1)$, one of the following options holds:
\begin{enumerate}
  \item[(1).]   $\lambda(t;x)=\lambda(t;y)$;
  \item [(2).]  $\lambda(t;x) \ne \lambda(t;y)$ with  the spectral projection $ E^x \{\lambda (t;x)  \} $ being an atom in $\cM$ and $$\int_{ \{s;\lambda (s;x)=\lambda (t;x)\}}  \lambda (s;y)ds    =  \lambda(t ;x)     \tau(E^x (\{\lambda (t;x)\}))  .  $$
\end{enumerate}
\end{thm}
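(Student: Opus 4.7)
The main tool is the Ky Fan maximum principle: for a self-adjoint $a\in L_1(\cM,\tau)$ and $s\in(0,1)$,
\[
\int_0^s\lambda(t;a)\,dt=\sup\{\tau(ap):p\text{ projection},\ \tau(p)=s\},
\]
with equality iff $E^a((\lambda(s;a),\infty))\le p\le E^a([\lambda(s;a),\infty))$; this condition in particular forces $p$ to commute with $a$. A preliminary observation used throughout: if conditions (1)/(2) hold at each $t$, then for every flat piece $[a,b):=\{s:\lambda(s;x)=c\}$ of $\lambda(x)$ one has $\int_a^b\lambda(y)\,dt=c(b-a)$ (from (2) in the discrepancy case, automatically when $\lambda(y)=c$ on the piece), while $\lambda(x)=\lambda(y)$ at every non-flat point. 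Consequently $F(s):=\int_0^s(\lambda(y)-\lambda(x))\,dt$ vanishes outside the open interiors of atomic flat pieces where discrepancy occurs, so the set $S:=\{F=0\}$ contains every endpoint of a flat piece and every continuous-spectrum point of $\lambda(x)$.

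\textbf{Sufficiency.} Assume $x$ satisfies the conditions and $x=(x_1+x_2)/2$ with $x_1,x_2\in\Omega(y)$. For each $s\in S$, choose a Ky Fan optimal $p_s$ for $x$; the chain
\[
\int_0^s\lambda(y)\,dt=\tau(xp_s)=\tfrac12\tau(x_1p_s)+\tfrac12\tau(x_2p_s)\le\tfrac12\int_0^s\lambda(x_1)+\tfrac12\int_0^s\lambda(x_2)\le\int_0^s\lambda(y)
\]
collapses to equality, so $p_s$ is Ky Fan optimal for both $x_i$ and hence commutes with each $x_i$. Applied at the endpoints $a,b$ of each atomic flat piece $[a,b)$ of $\lambda(x)$ at level $c$, the difference $p_b-p_a=E^x\{c\}$ is an atom of $\cM$ commuting with $x_i$; minimality gives $x_iE^x\{c\}=\alpha_iE^x\{c\}$, and $\tau(x_i(p_b-p_a))=\int_a^b\lambda(y)\,dt=c(b-a)$ pins $\alpha_i=c$. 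For non-atomic flat pieces $[a',b')$, varying the choice of $p_s$ inside $E^x\{c\}$ shows $x_i$ commutes with every subprojection of $E^x\{c\}$; a concavity argument on $g(s):=\int_{a'}^s\lambda(x_i)\,dt-c(s-a')$ (which is concave, nonpositive, and vanishes at $s=a',b'$) forces $\lambda(x_i)=c$ on $[a',b')$, whence $E^x\{c\}\le E^{x_i}\{c\}$ and again $x_iE^x\{c\}=cE^x\{c\}$. On the continuous spectrum part, differentiating $\int_0^s\lambda(x_i)=\int_0^s\lambda(x)$ (valid for $s\in S$) and invoking right-continuity gives $\lambda(x_i)=\lambda(x)$ pointwise; then Ky Fan uniqueness ($E^{x_i}\{\lambda(s;x_i)\}=0$ since $\lambda(x_i)$ is continuous at $s$) identifies $E^x((v,\infty))=E^{x_i}((v,\infty))$ for all $v$ in the continuous spectrum range, so $x_i=x$ there as well. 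Combining the cases, $x_1=x_2=x$, proving $x$ extreme.

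\textbf{Necessity.} Contrapositively, suppose conditions (1) and (2) both fail at some $t_0$; I construct a nonzero $h\in L_1(\cM)_h$ with $\tau(h)=0$ and $x\pm\varepsilon h\in\Omega(y)$ for all small $\varepsilon>0$, contradicting extremality. Let $c:=\lambda(t_0;x)\ne\lambda(t_0;y)$ and $[a,b):=\{s:\lambda(s;x)=c\}$. If $E^x\{c\}$ is non-atomic (including the degenerate case $E^x\{c\}=0$, handled by replacing $E^x\{c\}$ with a small non-atomic spectral projection $E^x(I)$ for $I$ a short interval near $c$), split it as $q_1+q_2$ with both $q_i$ nonzero and set $h:=\tau(q_2)q_1-\tau(q_1)q_2$; a direct estimate bounds the induced change in $\int_0^s\lambda(x\pm\varepsilon h)$ by $\varepsilon\tau(q_1)\tau(q_2)$, which is absorbed by the slack of $F$, the hypothesis $\lambda(t_0;y)\ne c$ guaranteeing either $F(a)>0$ or $\lambda(y)>c$ on a right-neighborhood of $a$. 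If $E^x\{c\}$ is an atom but the averaging condition fails, the atom cannot be subdivided; instead, the discrepancy $\int_a^b\lambda(y)\,dt\ne c(b-a)$ opens directional slack in $F$ across $[a,b]$. Perturb $x$ on $E^x\{c\}$ by $\pm\varepsilon E^x\{c\}$ (with the sign chosen according to the sign of the discrepancy) and compensate by a trace-matched perturbation drawn from another portion of $\cM$, which exists unless $\cM=\bC\cdot E^x\{c\}$ (a trivial case where $\Omega(y)$ is a singleton); a careful verification confirms $x\pm\varepsilon h\in\Omega(y)$ for small $\varepsilon$.

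\textbf{Main obstacle.} The subtlest step is the atomic, failed-averaging case of necessity: the averaging condition in (2) is a sharp threshold, precisely the value at which trace-preserving, majorization-respecting perturbations of $x$ on the atom cease to exist. Building the compensating perturbation requires careful case analysis according to the sign of the discrepancy $\int_a^b\lambda(y)-c(b-a)$ and the structure of $\cM$ outside the atom (another atom, a non-atomic region, or a mix); the compensating element must itself satisfy the majorization bound, which is the main technical hurdle.
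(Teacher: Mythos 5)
Your sufficiency argument is essentially the paper's own route: the collapse of the Ky Fan chain at each $s$ with $F(s)=0$ forces every optimal projection for $x$ to be optimal for $x_1,x_2$, and the sandwich characterisation of optimisers then pins down the spectral projections of the $x_i$ and gives $x_1=x_2=x$ (the paper packages this as Lemma~\ref{lemma:ex}, Corollary~\ref{corcons} and Proposition~\ref{noncomm KPS lemma}). Be aware, though, that the ``equality iff $E^a((\lambda(s;a),\infty))\le p\le E^a([\lambda(s;a),\infty))$'' clause you invoke as known is not the routine positive-operator fact: for general self-adjoint $a$ it is exactly Lemma~\ref{reduction to commutative}/Corollary~\ref{corcons}, whose proof needs the conditional expectation onto the algebra generated by the spectral projections of $a$. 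Also, the sup over projections of trace exactly $s$ need not be attained (or even nonempty) for arbitrary $s$ in an algebra with atoms; your argument survives only because the points of $S$ you use are endpoints of flat pieces or continuity points, where $E^x((\lambda(s;x),\infty))$ has trace exactly $s$ — this should be said.

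The necessity direction has a genuine gap: the two hardest cases are announced but not carried out. First, when the flat piece through $t_0$ is degenerate (more generally, when $\lambda(x)$ is not locally a step function), you replace $E^x\{c\}$ by $E^x(I)$ for a short value-interval $I$; but then $\lambda(x\pm\varepsilon h)$ is no longer $\lambda(x)$ plus a two-step bump supported on a constancy interval, and the estimate ``change $\le\varepsilon\tau(q_1)\tau(q_2)$, absorbed by the slack of $F$'' is not the relevant one. What is actually needed is the tangent-line slack $\int_0^s\lambda(t;y)\,dt\ge\int_0^{s_1}\lambda(t;x)\,dt+\lambda(s_1;x)(s-s_1)$ over the whole perturbed range, and producing points where this holds when $\lambda(x)$ is not a step function on a component of $\{F>0\}$ is precisely the content of the paper's Lemmas~\ref{counter example 1} and~\ref{counter example 2}; without it your perturbation can violate $x\pm\varepsilon h\prec y$ at intermediate $s$. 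Second, in the atomic case with failed averaging you write ``compensate by a trace-matched perturbation drawn from another portion of $\cM$'' and ``a careful verification confirms''. The compensating element cannot come from an arbitrary portion of $\cM$: failed averaging gives $F(a)\ne F(b)$, so the slack sits at exactly one endpoint of the flat piece, and the compensation must be supported on a spectral band of $x$ adjacent in value to $c$ on that side (otherwise the reordering of eigenvalues destroys the majorisation between the two supports); one then still has to run the step-function dichotomy (the paper's Cases 1--3) on that adjacent band. You explicitly flag this as the main obstacle and leave it unresolved, so as it stands the necessity half is a plan rather than a proof.
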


Since any  finite measure space is a   von Neumann algebra equipped with a faithful normal finite trace, the complete resolution of Luxemburg's problem is an immediate consequence of Theorem \ref{RyffNonCom}.
Without loss of generality, we state the result for normalized measure spaces.
\begin{cor}\label{cor:Lux}
Assume that $(X,\Sigma, \nu)$ is an arbitrary  normalized measure space.
Let $y\in L_1(X)$ and let $\Omega(y) $ be the set of all integrable  functions on $(X,\Sigma, \nu)$ majorised by  $y$  in the sense of Hardy--Littlewood--P\'{o}lya.
Then, $x$ is an extreme point if and only if for each  $t\in (0,1)$, one of the following options holds:
\begin{enumerate}
  \item[(1).]   $\lambda(t;x)=\lambda(t;y)$;
  \item [(2).]  $\lambda(t;x) \ne \lambda(t;y)$ with  the preimage $  x^{-1}(\lambda (t;x))   $ being an atom in $(X,\Sigma, \nu)$ and $$\int_{ \{s;\lambda (s;x)=\lambda (t;x)\}}  \lambda (s;y)ds    =  \lambda(t ;x)   \nu(x^{-1}(\lambda (t;x)))   .  $$
\end{enumerate}
\end{cor}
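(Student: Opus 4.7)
The plan is to specialize Theorem \ref{RyffNonCom} to the abelian setting by identifying $(X,\Sigma,\nu)$ with a commutative von Neumann algebra equipped with a faithful normal tracial state. Concretely, set $\cM := L_\infty(X,\Sigma,\nu)$ acting on $L_2(X,\Sigma,\nu)$ by pointwise multiplication, and let $\tau(g) := \int_X g\, d\nu$. Since $(X,\Sigma,\nu)$ is a normalized measure space, $\tau$ is a faithful normal tracial state on $\cM$, so the hypotheses of Theorem \ref{RyffNonCom} are met.

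First I would note, as already recorded in the introduction, that under this identification $L_1(\cM,\tau)_h$ is isometrically order-isomorphic to the real Banach lattice $L_1(X,\Sigma,\nu)$, the spectral scale $\lambda(\cdot)$ of a self-adjoint element coincides with the right-continuous nonincreasing rearrangement of the corresponding measurable function, and the noncommutative majorisation $\lambda(x)\prec\lambda(y)$ reduces to the classical Hardy--Littlewood--P\'olya majorisation. Thus the set $\Omega(y)$ in Corollary \ref{cor:Lux} coincides with the set $\Omega(y)$ appearing in Theorem \ref{RyffNonCom}, so $x\in L_1(X)$ is an extreme point of one set if and only if it is an extreme point of the other.

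Next I would translate the two alternatives in the conclusion of Theorem \ref{RyffNonCom}. Option (1) is stated entirely in terms of spectral scales and carries over verbatim. For option (2), fix a measurable representative of $x$ and observe that the spectral projection $E^x\{\lambda(t;x)\}$ of $x$, viewed inside $\cM=L_\infty(X,\Sigma,\nu)$, is the indicator function $\chi_{x^{-1}(\lambda(t;x))}$. The projection $\chi_A$ is an atom of the von Neumann algebra $L_\infty(X,\Sigma,\nu)$ precisely when $A$ is an atom of the measure space $(X,\Sigma,\nu)$, since an abelian projection in $L_\infty$ is minimal iff the underlying measurable set admits no proper measurable subset of strictly smaller, positive measure. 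Moreover $\tau(\chi_{x^{-1}(\lambda(t;x))})=\nu(x^{-1}(\lambda(t;x)))$, so the integral equality in (2) of Theorem \ref{RyffNonCom} transforms verbatim into the one stated in Corollary \ref{cor:Lux}.

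The only point requiring any care, and likely the main (though routine) obstacle, is verifying that the preimage $x^{-1}(\lambda(t;x))$ is well defined modulo $\nu$-null sets, so that the atom condition and the integral make sense independently of the chosen representative of $x$. This is immediate because any two representatives of $x$ differ on a null set, and atoms have strictly positive measure, so the property of being an atom and its $\nu$-measure are representative-invariant. With this dictionary in place, each of the two equivalent characterisations in Theorem \ref{RyffNonCom} transfers termwise to the corresponding statement in Corollary \ref{cor:Lux}, yielding the complete resolution of Luxemburg's problem.
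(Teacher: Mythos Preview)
Your proposal is correct and matches the paper's own treatment: the paper states Corollary~\ref{cor:Lux} as an immediate consequence of Theorem~\ref{RyffNonCom} via the identification of $(X,\Sigma,\nu)$ with the commutative von Neumann algebra $L_\infty(X,\Sigma,\nu)$ equipped with $\tau(g)=\int_X g\,d\nu$, without giving a separate formal proof. Your dictionary (spectral scales $\leftrightarrow$ nonincreasing rearrangements, $E^x\{\lambda(t;x)\}\leftrightarrow\chi_{x^{-1}(\lambda(t;x))}$, minimal projections $\leftrightarrow$ atoms) is exactly the intended translation.
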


\begin{rmk}\label{R:Ex}
Let $f (t):= \frac{1}{\sqrt{t}}$, $t\in(0,\frac 12)$.
 Hence, $f\oplus 0\in L_1(0,\frac 12)\oplus \mathbb{C} e$, where $e$ is an atom with measure $\frac 12$.
Even though $L_1(0,\frac 12)\oplus \mathbb{C}e$ can be embedded into an atomless finite measure space, Ryff's theorem only provides  a sufficient condition for $g\in  L_1(0,\frac 12)\oplus \mathbb{C} e$ to be an extreme point of $\Omega(f)$.
 That is, every  $g$ with $\lambda (g)=\lambda (f\oplus 0e )$ is an extreme point  of $\Omega (f \oplus 0e  )$.
 However, there exist extreme points   of $\Omega(f\oplus 0e )$ which could not be described in this way.
By Corollary~\ref{cor:Lux},
for every $a\in [0,\frac{1}{2})$, $g := f\chi_{(0,a)}  \oplus 2 e \int_a^{\frac{1}{2}} f(t) d t$ is an extreme point of $\Omega (f\oplus 0e)$.
In particular, $0\oplus 2 \sqrt {2}e$ is an extreme point of $\Omega (f\oplus 0e)$.
 \end{rmk}

Let $M_n (\mathbb{C})$ be the $n\times n$ matrices and $E_D$ be the compression onto $D$, the diagonal masa (see e.g. \cite{SS}) in $M_n (\mathbb{C})$.
The celebrated Schur-Horn theorem \cite{Horn,Schur} sates that
$$E_D (\{U M_\beta U^*: U\in M_n (\mathbb{C})\mbox{ is unitary}\}) = \{M_\alpha \in D : \alpha \prec \beta \},~ \beta\in M_n (\mathbb{C})_h, $$
where $M_\alpha$ is the diagonal matrix with the entries of $\alpha$ in the main diagonal.
Inspired by the Arveson--Kadison conjecture \cite{AK},
 several authors (e.g. \cite{AM,Jasper,BJ,BJS,MR,KS}) have worked on the analogues to the Schur--Horn theorem.
In particular,  Argerami and Massey \cite{AM}  established a result for type  II$_1$ factors.
As an application of Theorem \ref{RyffNonCom}, by applying results in \cite{Horn,Schur} and \cite{AM},  we obtain the following.

\begin{cor}\label{corsh}
Given a type II$_1$ factor $\cM$  (resp. $\cM=M_n(\mathbb{C})$) and a diffuse abelian von Neumann subalgebra $\cA\subset  \cM$ (resp. $\cA$ is the  diagonal masa in $M_n (\mathbb{C})$), for every $y\in \cM_h$,  we have
$$\extr (\overline{E_\cA (\cU_\cM(y))}^{\sigma \text{-sot}} )= \extr(\{ x\in \cA_h : x\prec y \}) = \{  x\in \cA_h : \lambda (x)=\lambda (y)\}   ,$$
where $E_\cA$ is the conditional expectation onto $\cA$ and $\cU_\cM(y)$ is the unitary orbit of $y$ in $\cM$.
\end{cor}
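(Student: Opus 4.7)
I propose to establish the two displayed equalities in turn. The first equality
\[
\extr(\overline{E_\cA(\cU_\cM(y))}^{\sigma \text{-sot}}) = \extr(\{x \in \cA_h : x \prec y\})
\]
follows immediately from the set-theoretic identity
\[
\overline{E_\cA(\cU_\cM(y))}^{\sigma \text{-sot}} = \{x \in \cA_h : x \prec y\},
\]
which in the matrix setting is the classical Schur--Horn theorem \cite{Schur,Horn} (no closure is needed since the unitary orbit is compact) and in the type II$_1$ factor setting is precisely the theorem of Argerami--Massey \cite{AM}. The substance of the proof is therefore the second equality.

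For this, I restrict attention to the commutative von Neumann algebra $\cA$ equipped with the restricted trace $\tau|_{\cA}$, which is again a faithful normal tracial state. Pick $\tilde y \in \cA_h$ whose spectral scale equals $\lambda(y)$: in the diffuse case identify $\cA$ with $L_\infty$ of an atomless probability space and take $\tilde y$ corresponding to $\lambda(y)$; in the matrix case place the eigenvalues of $y$ on the diagonal. Since Hardy--Littlewood--P\'olya majorisation depends only on spectral scales, $\{x \in \cA_h : x \prec y\} = \Omega_\cA(\tilde y)$, so Corollary~\ref{cor:Lux} applied inside $\cA$ characterises the extreme points by the dichotomy (1)/(2) stated there at each $t \in (0,1)$.

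The main step is then to rule out option (2) in both settings, so that option (1) holds at every $t \in (0,1)$ and consequently $\lambda(x) = \lambda(y)$. When $\cA$ is diffuse, the measure space underlying $\cA$ is atomless and (2) is vacuous. When $\cA$ is the diagonal masa in $M_n(\mathbb{C})$, every atom has mass $1/n$; if $x^{-1}(\lambda(t;x))$ is an atom, then the level set $\{s : \lambda(s;x) = \lambda(t;x)\}$ is a single interval of length $1/n$ on which $\lambda(\cdot\,;y)$ is constant and equal to $\lambda(t;y)$, so the integral identity collapses to $\lambda(t;y)\cdot\tfrac{1}{n} = \lambda(t;x)\cdot\tfrac{1}{n}$, contradicting the hypothesis $\lambda(t;x) \neq \lambda(t;y)$ of option (2). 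The converse inclusion is immediate, since $\lambda(x) = \lambda(y)$ makes (1) hold at every $t$. I expect this collapse of option (2) in the atomic matrix case, which hinges on the precise form of the integral identity in Corollary~\ref{cor:Lux}, to be the main point requiring careful verification; the diffuse case is automatic.
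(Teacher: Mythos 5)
Your proposal is correct and follows essentially the same route as the paper: the first equality is quoted from the Schur--Horn theorem and Argerami--Massey, and the second is obtained by replacing $y$ with an element of $\cA_h$ having the same spectral scale and then applying the main characterisation inside the abelian algebra $\cA$. Your explicit verification that option (2) of the characterisation collapses in the diagonal-masa case (all atoms have trace $1/n$ and $\lambda(y)$ is constant on each grid interval, forcing $\lambda(t;x)=\lambda(t;y)$) is a detail the paper leaves implicit, but it is the same argument.
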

In particular, by the  Krein--Milman theorem,    $\overline{E _\cA (\cU_\cM(y))}^{\sigma \text{-sot}} $ is the  closed convex hull of $\{  x\in \cA_h : \lambda (x)=\lambda (y)\}$ in the $\sigma$-strong operator topology.


Finally, we comment briefly at  the interconnection of spectral scales and singular value functions.
For a given positive $\tau$-measurable operator $x$ affiliated to $\cM$, the spectral scale $\lambda(x)$ of the operator is equal to the singular values function $\mu(x),$ see e.g. \cite{FK}. However, for a general self-adjoint $x\in L_1(\mathcal{M},\tau)_h$ the spectral scale $\lambda(x)$ and singular value function $\mu(x)$ are different, and so one can consider other majorisations given by singular value functions. Namely, we say that $y$ is submajorised by $x$ (denoted by  $y\prec\prec x$) if $\int_0^s\mu(t;y)dt\leqslant\int_0^s\mu(t;x)dt$ for all $s\in[0,\tau(\mathbf 1)).$
In \cite{S85} and \cite{Sukochev} (see also \cite{CKS}), it is  proved that for an arbitrary atomless finite von Neumann algebra with a faithful normal finite trace $\tau$ and    for any $x\in L_1(\mathcal{M},\tau)_h$,
the extreme points of    $\{y\in L_1(\mathcal{M},\tau)_h\ :\ y\prec\prec x\}$ are exactly the operators $y$ satisfying $\mu(y)=\mu(x)$ (see  \cite{CK1, CK2, CK3, SZ2,Hiai} for more related results).
  The result stated in Theorem~\ref{RyffNonCom} does not follow from that of \cite{CKS} even in the setting of atomless von Neumann algebras.
  Our proof of Theorem~\ref{RyffNonCom} is completely different  from that in  \cite{CKS} and is based on a careful study of Hardy--Littlewood--P\'{o}lya majorisation in the commutative and the noncommutative setting.
\section{Preliminaries}\label{Preliminaries}

Throughout this paper, we denote by $\mathcal{M}$ a von Neumann algebra equipped with a faithful normal finite trace $\tau$.
We  denote by $\mathbf 1$ the
identity in $\mathcal{M}$ and by $\mathcal{P}(\mathcal{M})$ the collection  of all orthogonal projections in $\mathcal{M}$.
Without loss of generality, we assume that $\tau({\bf 1})=1$.
A densely defined closed linear operator $x$ affiliated
with $\mathcal{M} $ is called \emph{$\tau$-measurable} if for each $\varepsilon>0$ there exists $e\in\mathcal{P}(\mathcal{M})$ with $\tau(e^{\perp})\leq\varepsilon$ such that $e(\mathcal{H})\subset\mathfrak{D}(x)$. Let us denote by $S(\mathcal{M},\tau)$ the set of all $\tau$-measurable operators.
We note that, since we assume that  the trace $\tau$ is finite, the space $S(\cM,\tau)$ is the set of all densely defined closed linear operators $x$ affiliated
with $\mathcal{M} $.
However, if the trace $\tau$ is infinite, then there are densely defined closed linear operators which are not $\tau$-measurable.
The set of all self-adjoint elements in $S(\mathcal{M},\tau)$ is denoted by $S(\mathcal{M},\tau)_h$,
which is a real linear subspace of $S(\mathcal{M},\tau)$.
The set of all positive elements in $S(\mathcal{M},\tau)_h$ is denoted by $S(\mathcal{M},\tau)_+$.

If $x\in S(\mathcal{M},\tau)_h$, then the \emph{spectral distribution function} $d(x)$ of $x$ is defined by setting
$$d(s;x)=\tau(E^ x(s,\infty)),\ \ s\in\mathbb{R},$$
where $E^x(s,\infty)$ is spectral projection of $x$ on the interval $(s,\infty).$

It is clear that the function $d(x):\mathbb{R}\rightarrow[0,\tau(\mathbf 1)]$ is decreasing and the normality of the trace implies that $d(x)$ is right-continuous.
If $x\in S(\mathcal{M},\tau)$, then the \emph{singular value function} $\mu(x):[0,1)\rightarrow[0,\infty]$ \cite{FK, F, DPS} is defined to be the decreasing right-continuous inverse of the spectral distribution function $d(|x|)$, that is,
$$\mu(t;x)=\inf\{s\geqslant0\ :\ d(s;|x|)\leqslant t\},\ \ t\in[0,1).$$

We introduce the notion of spectral scales (see  \cite{P}, see also \cite{HN1, HN2, DDP, DPS,AM}).
If $x\in S(\mathcal{M},\tau)_h$, then the \emph{spectral scales} (also called \emph{eigenvalue functions}) $\lambda(x):[0,1)\rightarrow(-\infty,\infty]$ and $\check{\lambda}(x):[0,1 )\rightarrow(-\infty,\infty]$ are defined by
$$\lambda(t;x)=\inf\{s\in\mathbb{R}\ :\ d(s;x)\leqslant t\},\ \ t\in[0,1),$$
and
$$\check{\lambda}(t;x)=\sup\{s\in\mathbb{R}\ :\ \tau(E^x(-\infty,s])\leqslant t\},\ \ t\in[0,1).$$
The spectral scales $\lambda(x)$ (resp. $\check{\lambda}(x)$) are decreasing (resp. increasing) right-continuous functions.
It is an immediate  corollary of \cite[Proposition 1]{P} (see also \cite[Chapter III, Remark 5.4]{DPS}) that \begin{align}\label{disx}
d(x)=d(\lambda (x)).
\end{align}
If $x\in S(\mathcal{M},\tau)_+$, then it is evident that $\lambda(t;x)=\mu(t;x)$ for all $t\in[0,1)$.
Note \cite{DPS,DDP} that
$$\check{\lambda}(t;x)=\lambda((1 -t)-;x)=-\lambda(t;-x),\ \ \forall t\in[0,1 ), $$
where $\lambda((\tau(\mathbf 1)-t)-;x) = \lim_{\varepsilon  \rightarrow 0^+} \lambda((\tau(\mathbf 1)-t)-\varepsilon;x)$.

Assume that  $\mathcal{M}=L_{\infty}(0,1)$ and $\tau(f)=\int_{0}^1 fdm$,
 where  $m$ the   Lebesgue measure on $(0,1)$.
 In this case, $S (\mathcal{M},\tau)_h$ consists of all real measurable functions $f$ on $(0,1)$. For every $f$, $\lambda(f)$ coincides with  the \emph{right-continuous equimeasurable nonincreasing rearrangement} $\delta_f$ of $f$ (see e.g. \cite{HN1}):
$$\lambda(t;f)=\delta_f(t)=\inf\{s\in\mathbb{R}:\ m(\{x\in X:\,f(x)>s\})\leqslant t\},\ \ t\in[0,1 ).$$

Using the extended trace $\tau:S(\mathcal{M},\tau)_+ \rightarrow[0;\infty]$ to a linear functional on $S(\mathcal{M},\tau),$ denoted again by $\tau$, the noncommutative $L_1$-space (written by $L_1(\mathcal{M},\tau)$) is defined by
$$L_1(\mathcal{M},\tau)=\{x\in S(\mathcal{M},\tau):\tau(|x|)<\infty\}.$$
(see e.g. \cite[p.84]{D}). Let us denote
$L_1(\mathcal{M},\tau)_h=\{x\in L_1(\mathcal{M},\tau):x=x^{\ast}\}$ and $L_1(\mathcal{M},\tau)_+=L_1(\mathcal{M},\tau)\cap S(\mathcal{M},\tau)_+ .$
We note that for every $x\in L_1(\cM,\tau)_h$, we have (see e.g. \cite{DDP}, \cite[Proposition 1]{P} and \cite[Chapter III, Proposition 5.5]{DPS})
\begin{equation}\label{spectral scale}
  \tau(x)=\int_0^1\lambda(t;x)dt=\int_0^1 \check{\lambda}(t;x)dt.
\end{equation}




For every $f,g\in L_1(\cM,\tau),$ $g$ is said to be  \emph{majorised} by $f$ in the sense of Hardy--Littlewood--P\'{o}lya (written by $g\prec f$)  if
$$\int_0^s\lambda(t;g)dt\leqslant\int_0^s\lambda(t;f)dt$$
for all $s\in[0,1)$ and
$$\int_0^{1}\lambda(t;g)dt=\int_0^{1}\lambda(t;f)dt.$$
$g$ is said  to be  \emph{submajorised} by $f$ in the sense of Hardy--Littlewood--P\'{o}lya (written by $g\prec\prec f$) if and only if
$$\int_0^s\mu(t;g)dt\leqslant\int_0^s\mu(t;f)dt, ~\forall s\in[0,1).  $$
For  a self-adjoint element $y\in L_1(\mathcal{M},\tau)_h$, we denote
$$\Omega(y):=\{x\in L_1(\mathcal{M},\tau)_h\ :\ x\prec y\}.$$
We note  that
\begin{align}\label{tri}
\lambda (x+y)\prec \lambda (x)+\lambda (y), ~\forall x,y\in L_1(\cM,\tau).
\end{align}
For every $x\in S(\cM,\tau)_h$ and  $s\in \mathbb{R}$, if $e\in \cP(\cM)$ is such that $$E^x(s,\infty) \le e \le E^x [ s,\infty ),$$
then  (see \cite{DDP} or \cite[Chapter III, Lemma 8.2]{DPS}):
\begin{align}\label{2.2}
\lambda (t;xe) =\lambda (t;x) \mbox{ for all } t\in [0,\tau(e)),
\end{align}
and
\begin{align}\label{2.3}
\lambda (t;xe^\perp ) =\lambda (t+\tau(e);x) \mbox{ for all } t\in [0,\tau(e^\perp )).
\end{align}
Suppose that $x,y\in S(\cM,\tau)_h$.
If $\lambda(|x|)\lambda(|y|)\in L_1(0,1)$, then (see \cite[Proposition 2.3]{DDP},  see also \cite[Chapter III, Theorem 6.6]{DPS}) $xy \in L_1(\cM,\tau)$ and
\begin{align}\label{2.4}
\int_0^1 \lambda (s;x)\check{ \lambda} (s;y)ds \le \tau(xy)\le \int_0^1 \lambda (s;x)\lambda (s;y)ds.
\end{align}
If, in addition,  $\cM$ is atomless, then for any $x\in L_1(\cM,\tau)$ and $t\in [0,1)$, we have (see e.g. \cite[Proposition 1.1]{HN1} and \cite[Chapter III, Remark 8.6]{DPS})
\begin{align*}
\int_0^t \lambda (s;x)ds & =\sup \{\tau(xe) :e \in \cP(\cM), \tau(e)=t\} \\
&=\max\{ \tau(xe);e\in \cP(\cM), \tau(e) =t , ex=xe\}.
\end{align*}




\section{Some technical results}\label{secCom}



Let $\cM$ be a von Neumann algebra equipped with a finite faithful normal trace.
Some of the results in this section are well-known for positive operators (see e.g. \cite{DPS,HSZ,CKSa,CS}). However, the results in this section do not follow from those for positive operators.


We note that for any $y\in L_1(0,1)$, the notation $\Omega(y):=\{ x\in L_1(\cM,\tau):\lambda (x)\prec y  \}$  makes sense.
All results in the present section hold true for $y\in L_1(0,1)$ and $x\in L_1(\cM,\tau)$.
However, to avoid ambiguity, we always assume that $y\in L_1(\cM,\tau) $.

The following is a noncommutative   analogue of Ryff's Proposition stated in \cite{Ryff}.
\begin{lem}\label{counter example 1} Let $y  \in L_1(\cM, \tau)_{h}.$ If $x \in\Omega(y)=\{x\in L_1(\mathcal{M},\tau)_h:\ x\prec y \}$ satisfies that
\begin{enumerate}[{\rm (i)}]
  \item $\lambda (s_{i+1};x )< \lambda(s_i;x)$ for some $0<s_i<s_{i+1}<1,$ $i=1,2,3$,
  \item $ \int_0^{s_1}\lambda (t;x  )dt+\lambda (s_1;x )(s-s_1) \le \int_0^{s}\lambda (t; y)dt ,$ for all $s\in[s_1,s_4],$
\end{enumerate}
then $x \not\in\emph{extr}(\Omega(y)).$
\end{lem}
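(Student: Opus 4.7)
The plan is to realise $x$ as a non-trivial midpoint $x = \tfrac{1}{2}((x+h)+(x-h))$ of two distinct elements of $\Omega(y)$. Set $\alpha_i := \lambda(s_i;x)$, so by (i) we have $\alpha_1 > \alpha_2 > \alpha_3 > \alpha_4$. The defining property of the spectral scale gives $d(\alpha_i - \varepsilon;x) > s_i$ for every $\varepsilon > 0$, which implies $\tau(E^x(\alpha_i - \varepsilon,\alpha_i]) > 0$ for $i = 2, 3$ whenever $\varepsilon$ is small. Since $\cM$ is a finite von Neumann algebra, pick sub-projections $p \le E^x(\alpha_2 - \varepsilon,\alpha_2]$ and $q \le E^x(\alpha_3 - \varepsilon,\alpha_3]$ with common positive trace $\tau(p) = \tau(q) =: \delta > 0$. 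Set
\[
h := \eta(p - q), \qquad 0 < \eta < \min\bigl\{\alpha_1 - \alpha_2,\ \tfrac{\alpha_2 - \alpha_3}{2},\ \alpha_3 - \alpha_4\bigr\},
\]
choosing $\eta$ and $\varepsilon$ (generically) so that none of $\alpha_1$, $\alpha_3 - \eta - \varepsilon$ are atoms of $x$. Then $h$ is a non-zero self-adjoint element with $\tau(h) = 0$, and $h$ commutes with $x$.

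To verify $x + h \in \Omega(y)$: the operator $(x+h)|_p$ has spectrum in $(\alpha_2 - \varepsilon + \eta,\alpha_2 + \eta]$ and $(x+h)|_q$ in $(\alpha_3 - \varepsilon - \eta,\alpha_3 - \eta]$, both contained in $(\alpha_4,\alpha_1)$. Consequently $E^{x+h}[\alpha_1,\infty) = E^x[\alpha_1,\infty)$ and $E^{x+h}(-\infty,\alpha_3 - \eta - \varepsilon] = E^x(-\infty,\alpha_3 - \eta - \varepsilon]$. The first identity gives $d(\alpha_1;x+h) = d(\alpha_1;x) \le s_1$, hence $\lambda(t;x+h) \le \alpha_1$ for every $t \ge s_1$; together with the second it ensures $\lambda(x+h) = \lambda(x)$ on $[0,s_1]$ (the top of the spectrum is untouched) and on $[s_4,1)$ (here one uses $s_4 \ge d(\alpha_3 - \eta - \varepsilon;x)$, which follows from $\lambda(s_4;x) = \alpha_4 < \alpha_3 - \eta - \varepsilon$ and the absence of an atom at that level). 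For $s \in [s_1, s_4]$, combining the bound $\lambda(t;x+h) \le \alpha_1$ on $[s_1,s]$ with condition (ii),
\[
\int_0^s \lambda(t;x+h)\,dt = \int_0^{s_1}\lambda(t;x)\,dt + \int_{s_1}^s \lambda(t;x+h)\,dt \le \int_0^{s_1}\lambda(t;x)\,dt + \alpha_1(s - s_1) \le \int_0^s \lambda(t;y)\,dt.
\]
For $s \in [0,s_1]$ the inequality reduces to $\int_0^s \lambda(x)\,dt \le \int_0^s \lambda(y)\,dt$ via $x \prec y$, and for $s \in [s_4,1]$, since $\tau(x+h) = \tau(x)$ and $\lambda(x+h) = \lambda(x)$ on $[s_4,1)$, we obtain $\int_0^s \lambda(x+h)\,dt = \int_0^s \lambda(x)\,dt \le \int_0^s \lambda(y)\,dt$. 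Hence $x + h \prec y$.

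For $x - h$: since $p, q$ commute with $x$, the von Neumann subalgebra generated by $\{x,p,q\}$ is abelian, so we may work in a commutative model. There $x - h$ arises from $x$ by shifting a measure-$\delta$ band at spectral level $\alpha_2$ downward by $\eta$ and a measure-$\delta$ band at level $\alpha_3$ upward by $\eta$. The bound $\eta < (\alpha_2 - \alpha_3)/2$ (together with small $\varepsilon$) guarantees $v + \eta \le u - \eta$ for $u, v$ in the $p$- and $q$-bands respectively; for every convex $\phi$ the pointwise inequality $\phi(u - \eta) + \phi(v + \eta) \le \phi(u) + \phi(v)$ then holds, and pairing points of $p$ with points of $q$ in the commutative model and integrating yields $\int \phi(x-h)\,d\mu \le \int \phi(x)\,d\mu$ for every convex $\phi$. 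By the Hardy--Littlewood--P\'{o}lya (Karamata) criterion this is equivalent to $x - h \prec x \prec y$, so $x - h \in \Omega(y)$. Since $h \ne 0$, $x + h \ne x - h$, exhibiting $x$ as a proper midpoint in $\Omega(y)$ and hence $x \notin \extr(\Omega(y))$.

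The main obstacle is the spectral bookkeeping for $x + h$: one must verify that the top of the spectrum (above $\alpha_1$) and the bottom (below $\alpha_3 - \eta - \varepsilon$) of $x$ are both genuinely preserved under the perturbation, so that the inequality $\lambda(x+h) \le \alpha_1$ past $s_1$ can be combined with condition (ii) over the full interval $[s_1,s_4]$; this requires careful accounting in the presence of possibly continuous or atomic spectrum of $x$. A secondary issue is producing projections $p, q$ of matching positive trace when $\cM$ has type I summands, which is handled by selecting atoms of equal trace (or sub-projections thereof in the type II$_1$ part) from the relevant spectral projections.
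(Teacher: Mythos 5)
Your overall strategy --- perturbing $x$ by a trace-zero self-adjoint element supported on spectral bands lying strictly between the levels $\lambda(s_1;x)$ and $\lambda(s_4;x)$, and then combining the resulting bound $\lambda(\cdot\,;x\pm h)\le\lambda(s_1;x)$ on $[s_1,s_4]$ with hypothesis (ii), while the spectral scale is unchanged off $[s_1,s_4]$ --- is exactly the paper's. However, there is one genuine gap: you require subprojections $p\le E^x(\alpha_2-\varepsilon,\alpha_2]$ and $q\le E^x(\alpha_3-\varepsilon,\alpha_3]$ with \emph{equal} positive trace, and finiteness of $\cM$ does not guarantee these exist. If the two spectral bands are atoms of distinct traces --- e.g.\ in $L_\infty(0,\tfrac12)\oplus\mathbb{C}e_1\oplus\mathbb{C}e_2$ with $\tau(e_1)=\tfrac13$ and $\tau(e_2)=\tfrac16$ --- their only subprojections are $0$ and the atoms themselves, so no common positive trace value is available. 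Your proposed remedy (``selecting atoms of equal trace'') therefore fails, and this is not a peripheral case: accommodating atoms is precisely what separates this lemma (and the paper) from the atomless results of Ryff and Luxemburg. The paper's fix is to drop the equal-trace requirement and take the weighted combination $u=p_1-\frac{\tau(p_1)}{\tau(p_2)}\,p_2$, which has zero trace for arbitrary positive traces $\tau(p_1),\tau(p_2)$; with this $u$ (and $\delta$ small enough that the perturbed bands stay inside $(\lambda(s_4;x),\lambda(s_1;x))$) your verification for $x+\delta u$ goes through unchanged.

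A secondary point: your asymmetric treatment of $x-h$ via the Karamata/convexity criterion is an unnecessary detour, and its ``pairing of points'' step again presupposes $\tau(p)=\tau(q)$ together with an atomless commutative model. Since the perturbed spectral values of $x-h$ also lie strictly between $\lambda(s_4;x)$ and $\lambda(s_1;x)$, the very same argument you give for $x+h$ --- $\lambda(t;x-h)=\lambda(t;x)$ off $[s_1,s_4]$, $\lambda(t;x-h)\le\lambda(s_1;x)$ on $[s_1,s_4]$, then (ii) --- yields $x-h\prec y$; this is how the paper handles both signs simultaneously.
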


\begin{proof}
For the sake of convenience, we denote $a_i =\lambda(s_i;x )$, $i=1,2,3,4$.
Let $p_1 := E^{x } [\frac{a_2+a_3}{2}, \frac{a_1+a_2}{2})$ and $p_2:= E^{x } [\frac{a_3+a_4}{2},\frac{a_2+a_3}{2})$.
We denote $T_1= \tau(p_1 )$ and $T_2 = \tau(p_2) $.
Observe that  $T_1,T_2\in (0,1)$ and $p_1p_2 =0 $.
Set
$$ u : =  p_1 -  \frac{T_1}{T_2} p_2 . $$
It is clear that $\tau(u) =0$.
Assume that $\delta>0$ such that $$\delta < \min\{ \frac{a_1-a_2}{2}, \frac{(a_3-a_4)T_2 }{2 T_1} , \frac{ (2a_1-a_2-a_3)T_2 }{2T_1} , \frac{a_2+a_3-2a_4}{2}\} .$$
Let $x_\pm=x\pm \delta u$.
By the spectral theorem, $\lambda(s;x_{\pm})=\lambda (s;x )$ for
$s\not\in[s_1,s_4]$ and $\lambda(s;x_{\pm})\leqslant \lambda (s_1;x  )$ for
$s\in[s_1,s_4].$
We assert that $x_{\pm}\prec y $.


Note that
\begin{eqnarray*}
\int_0^{1}\lambda(t;x_{\pm})dt =
 \tau (x_{\pm})    =\tau( x \pm u)=\tau(x )=
 \int_0^{1}\lambda(s;x )ds= \int_0^{1}\lambda(t;y )dt.
\end{eqnarray*}
Since $\lambda(s;x_{\pm})=\lambda (s;x)$ for
$s\not\in[s_1,s_4]$,  it may be concluded that
$$\int_{s_1}^{s_4}\lambda(t;x_{\pm})dt-\int_{s_1}^{s_4}\lambda (t;x) dt=\int_0^{1}\lambda(t;x_{\pm})dt-\int_0^{1}\lambda (t;x) dt=0.$$
Hence,
$$\int_0^s\lambda(t;x_{\pm})dt=\int_0^s \lambda (t;x)dt\leqslant\int_0^s \lambda (t;y )dt,\quad s\not\in[s_1,s_4],$$
where the last inequality follows from the assumption that  $x\in\Omega(y).$
On the other hand, since $\lambda (x)$ is decreasing, it follows that
$$\int_0^{s}\lambda(t;x _{\pm})dt\leqslant\int_0^{s_1}\lambda (t;x )dt+ \lambda (s_1;x)(s-s_1)\stackrel{(ii)}{\leqslant }\int_0^{s}\lambda (t; y )dt,\quad s\in[s_1,s_4].$$
Hence, $x_{\pm}\in\Omega(y )$ and $x =\frac12(x_++x _-).$
That is, $x \not\in\emph{extr}(\Omega(y))$.
\end{proof}

\begin{lem}\label{counter example 2} Let $y\in L_1(\cM,\tau).$ If $x  \in\emph{extr}(\Omega(y))$ and $\int_0^s \lambda (t;y )dt>\int_0^s\lambda (t;x)dt$ for all $s\in(t_1,t_2),$ then $\lambda (x) $ is a step function on $(t_1,t_2)$.
\end{lem}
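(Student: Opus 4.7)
The plan is to argue by contrapositive: assuming $\lambda(x)$ is not a step function on $(t_1,t_2)$, I will exhibit four points $s_1<s_2<s_3<s_4$ in $(t_1,t_2)$ fulfilling both hypotheses of Lemma~\ref{counter example 1}, which then produces a nontrivial convex decomposition $x=\tfrac12(x_++x_-)$ with $x_\pm\in\Omega(y)$, contradicting $x\in\extr(\Omega(y))$. First I note that a decreasing, right-continuous function on an interval is a step function if and only if it takes only finitely many distinct values there; so the standing assumption amounts to $\lambda(x)$ attaining infinitely many distinct values on $(t_1,t_2)$.

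Next I reduce condition~(ii) of Lemma~\ref{counter example 1} to a single endpoint inequality using concavity. Set $\phi(s):=\int_0^s\lambda(t;y)\,dt-\int_0^s\lambda(t;x)\,dt$, which is continuous and, by hypothesis, strictly positive on $(t_1,t_2)$. Write $F(s):=\int_0^s\lambda(t;y)\,dt-\int_0^{s_1}\lambda(t;x)\,dt-\lambda(s_1;x)(s-s_1)$. Since $s\mapsto\int_0^s\lambda(t;y)\,dt$ is concave as the integral of a decreasing function, $F$ is concave on $[s_1,s_4]$, while $F(s_1)=\phi(s_1)>0$. Concavity then ensures that $F\ge 0$ holds throughout $[s_1,s_4]$ as soon as $F(s_4)\ge 0$, and unpacking this gives the cleaner inequality
\[
\phi(s_4)\ \ge\ \int_{s_1}^{s_4}\bigl(\lambda(s_1;x)-\lambda(t;x)\bigr)\,dt,
\]
which is equivalent to condition~(ii).

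For the construction of the $s_i$, I would locate a compact subinterval $[a,a+\eta]\subset(t_1,t_2)$ on which $\phi\ge\phi(a)/2>0$ (by continuity of $\phi$) and $\lambda(x)$ is bounded by some $M<\infty$ (by right-continuity of $\lambda(x)$ at $a$), and on which $\lambda(x)$ attains at least four distinct values. A pigeonhole argument inside this subinterval then yields $s_1<s_4$ with $s_4-s_1<\phi(a)/(4M)$ and four distinct values of $\lambda(x)$ on $[s_1,s_4]$; consequently
\[
\int_{s_1}^{s_4}\bigl(\lambda(s_1;x)-\lambda(t;x)\bigr)\,dt\ \le\ 2M(s_4-s_1)\ <\ \phi(a)/2\ \le\ \phi(s_4),
\]
verifying~(ii), and selecting $s_2,s_3\in(s_1,s_4)$ at intermediate distinct-value points of $\lambda(x)$ yields~(i).

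The main obstacle is to produce such a subinterval $[a,a+\eta]$: the infinite-value accumulation of $\lambda(x)$ could, a priori, be confined to the endpoints of $(t_1,t_2)$, where $\phi$ may vanish. I would dispose of this by a case split. If $\lambda(x)$ is strictly decreasing on some open sub-subinterval of $(t_1,t_2)$, any interior point suffices. Otherwise $\lambda(x)$ is locally constant away from its jumps, and the infinitely many jumps must, by Bolzano--Weierstrass, accumulate at some $a^*\in[t_1,t_2]$. If $a^*$ lies in the interior, the argument above applies directly; if $a^*$ is a boundary point, the same conclusion persists, but the key estimate must be delivered by exploiting right-continuity of $\lambda(x)$ at the boundary---which forces $\lambda(s_1;x)-\lambda(s_4;x)\to 0$ on small boundary subintervals---together with a rate comparison against the vanishing of $\phi$ coming from $\lambda(x)\prec\lambda(y)$.
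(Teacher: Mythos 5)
Your reduction of condition (ii) of Lemma \ref{counter example 1} to the single endpoint inequality $\phi(s_4)\ge\int_{s_1}^{s_4}\bigl(\lambda(s_1;x)-\lambda(t;x)\bigr)\,dt$ via concavity of $s\mapsto\int_0^s\lambda(t;y)\,dt$ is correct, and your treatment of an \emph{interior} accumulation point of non-constancy is sound: there $\phi$ has a positive lower bound on a compact neighbourhood while the oscillation term can be made arbitrarily small. The genuine gap is exactly the case you flag at the end, and the ``rate comparison'' you hope for does not exist: when the jumps of $\lambda(x)$ accumulate only at an endpoint of $(t_1,t_2)$ where $\phi$ vanishes, the required inequality can fail for \emph{every} admissible quadruple in $(t_1,t_2)$. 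Concretely, take $t_1=0$, $\lambda(t;x)=-2^{-k}$ on $[4^{-k-1},4^{-k})$, and $\lambda(t;y)=(1-\varepsilon)\lambda(t;x)$ near $0$. Then $\phi>0$ on the interval and $\lambda(x)$ takes infinitely many values; but for $s_4\in[4^{-k-1},4^{-k})$, condition (i) forces $s_1<4^{-k-3}$, whence $\int_{s_1}^{s_4}(\lambda(s_1;x)-\lambda(t;x))\,dt\ge\tfrac{39}{2048}\,8^{-k}$, while $\phi(s_4)\le\varepsilon\int_0^{4^{-k}}|\lambda(t;x)|\,dt=\tfrac{6}{7}\varepsilon\, 8^{-k}$; for $\varepsilon<0.02$ the inequality fails for all choices of the $s_i$. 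So Lemma \ref{counter example 1} applied inside $(t_1,t_2)$ cannot deliver the contradiction in this configuration. (Such an $x$ is in fact not extreme, but that is detected by a two-band perturbation as in Cases 1--2 of the proof of the main theorem, not by Lemma \ref{counter example 1}; your argument never invokes extremality to exclude the configuration, so the gap is real.)

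It is also worth comparing targets: the paper's own proof only derives a contradiction from the existence of an \emph{interior} point $s_1$ at which $\lambda(x)$ is non-constant on both one-sided neighbourhoods (after translating by $N\mathbf 1$ to reduce to positive operators, it exploits $\phi(s_1)>0$ at that interior point to verify (ii) on a short interval $[s_1,s_2]$). In effect it proves, and the main theorem only uses, that $\lambda(x)$ is a finite step function on compact subsets of $(t_1,t_2)$; steps accumulating at $t_1$ or $t_2$ are not excluded by its argument. Your proposal aims at the literally stronger conclusion (finitely many values on all of $(t_1,t_2)$), and that extra strength is precisely where it breaks. A correct completion should either settle for the localized statement, or handle the boundary case by a genuinely different perturbation of $x$.
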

\begin{proof}
Assume by contradiction that there exists $s_1\in (t_1,t_2)$   such that  $(s_1-\varepsilon,s_1)$ and $(s_1,s_1+\varepsilon)$ are not constancy intervals of $\lambda (x )$ for some $\varepsilon>0.$ Define
$$N=\max{\left( -\lambda (s_1;y), 0\right)}  +\max{\left( -\lambda(s_1;x), 0 \right)}  +1.$$
Let $Y= y+N{\bf1} $ and $X=x+N{\bf1}  $.
Since  $x\in \extr(\Omega (y))$, it follows that $X\in \extr(\Omega (Y))$.
Moreover, by spectral theorem (see also  \cite[Remark 6.1 (5)]{Hiai} or \cite[Chapter III, Proposition 5.7]{DPS}), we have  $\lambda (X)=\lambda (x+N{\bf1}) =\lambda (x)+N$ and $\lambda (Y)= \lambda (y+N{\bf1}) =\lambda (y)+N$.
Therefore,
$$\int_0^s \lambda (t;Y)dt>\int_0^s\lambda (t;X)dt  , s\in (t_1,t_2) .$$
Note that  $\lambda (s_1;Y),\lambda (s_1;X)>0$.
Since $\lambda (Y)$ and $\lambda (X)$ are decreasing functions, it follows that $\lambda (Y)$ and $\lambda (X) $ are strictly positive on $(0 ,s_1]$.

Since $\lambda (Y)$ and $\lambda (X)$ are right-continuous,
one can choose $s_2>s_1$ such that  $\lambda (s_2;Y)>   0$ and  $\int_0^s \lambda (t;Y)-\lambda (t; X)dt >  0$ for every  $s \in (t_1 ,s_2)$ and
$$\frac{\int_0^{s_1}\lambda (t;Y)-\lambda (t;X)dt}{\lambda (s_1;X)}\ge s_2-s_1.$$
Hence, for any $s\in [s_1,s_2]$, we have
$$ \int_0^{s} \lambda (t;Y)dt \ge   \int_0^{s_1}\lambda (t; Y )dt \ge   \int_0^{s_1} \lambda (t;X)dt +  \lambda (s_1;X) ( s_2-s_1) .  $$
Since $Y$ and $X$ satisfy the assumptions in  Lemma~\ref{counter example 1}, it follows that $X\not\in\text{extr}(\Omega(Y)).$
Consequently, $x\not\in\text{extr}(\Omega(y )).$
\end{proof}
Let $x \in L_1(\cM,\tau)_h$. Denote by $\cN$ the abelian von Neumann algebra generated by all spectral projections of $x $.
In particular, $x\in L_1(\cN,\tau)_h$.
Recall that (see \cite[Proposition 1.1]{HN1})
\begin{align}\label{Hineq}\int_0^s   \lambda (t;x)dt =\sup \{\tau(xa);a\in \cM, 0\le a\le 1, \tau(a)=s\}.
\end{align}
Assume that $\tau(xe) = \int_0^s  \lambda (t;x)dt$ for a projection $e\in \cM$ with $\tau(e)=  s $.
Let $E_\cN$ be  a  conditional expectation from $L_1(\cM,\tau)$ onto $ L_1(\cN,\tau)$ (see e.g. \cite{U}, see also \cite[Proposition 2.1]{DDPS}).
In particular,  $E _\cN(e)\le {\bf 1} $ and $\tau(E_\cN(e))=\tau(e)=s$.
Moreover, \begin{align}\label{ecnex}
\tau(E_\cN(e)x) =\tau(ex) =\int_0^s \lambda (t;x)dt.
\end{align}
We note that $E_\cN(f)$, $f\in \cP(
\cM)$,  is not necessarily a projection \cite{Sukochev}.
\begin{prop}Under the above assumptions on $e$, we have
\begin{align}\label{te}
E^{x}(\lambda (s;x),\infty) \le E_\cN(e) \le E^{x}[\lambda (s;x),\infty)
.\end{align}
\end{prop}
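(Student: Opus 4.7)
\medskip

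\noindent\textbf{Proof plan.} Set $\alpha := \lambda(s;x)$, $p := E^x(\alpha,\infty)$, $q := E^x[\alpha,\infty)$, and $a := E_\cN(e)$. All three of $p$, $q$, $a$ lie in the abelian algebra $\cN$, with $p \le q$, $0 \le a \le \mathbf{1}$, $\tau(a) = s$, and by \eqref{ecnex},
\[
\tau(ax) = \tau(ex) = \int_0^s \lambda(t;x)\,dt.
\]
From the definition $\alpha = \lambda(s;x)$ and the right-continuity of the distribution function $d(x)$ one obtains $\tau(p) \le s \le \tau(q)$, while \eqref{disx} shows that the level set $\{t\in[0,1) : \lambda(t;x) = \alpha\}$ is precisely the interval $[\tau(p), \tau(q))$. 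The strategy is to reduce \eqref{te} to the equality case of a classical rearrangement inequality inside the commutative algebra $\cN$.

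Realize $\cN \cong L_\infty(Y,\mu)$ with $\mu(Y)=1$, identify $x$ with a real measurable function $X$ on $Y$, so that $p=\chi_{\{X>\alpha\}}$, $q=\chi_{\{X\ge\alpha\}}$, and the spectral scale $\lambda(x)$ is the decreasing rearrangement of $X$. The first step is to compute $\tau(a(x-\alpha\mathbf{1}))$ in two ways. Using $\tau(a)=s$ together with $\lambda(t;x)=\alpha$ on $[\tau(p),s]$,
\[
\tau(a(x-\alpha\mathbf{1})) = \int_0^s \lambda(t;x)\,dt - \alpha s = \int_0^{\tau(p)}(\lambda(t;x)-\alpha)\,dt.
\]
On the other hand, the standard identification of an integral over a super-level set with an integral of the decreasing rearrangement gives
\[
\tau((x-\alpha\mathbf{1})p) = \int_{\{X>\alpha\}}(X-\alpha)\,d\mu = \int_0^{\tau(p)}(\lambda(t;x)-\alpha)\,dt,
\]
and therefore $\tau(a(x-\alpha\mathbf{1})) = \tau((x-\alpha\mathbf{1})p)$.

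To conclude, partition $Y = \{X>\alpha\}\sqcup\{X=\alpha\}\sqcup\{X<\alpha\}$. Since $0\le a\le 1$, one has the pointwise bounds $0\le a(X-\alpha)\le (X-\alpha)$ on $\{X>\alpha\}$, $a(X-\alpha)=0$ on $\{X=\alpha\}$, and $a(X-\alpha)\le 0$ on $\{X<\alpha\}$. Integrating and combining with the equality derived above forces both of these inequalities to be tight; since $X-\alpha$ is strictly positive on $\{X>\alpha\}$ and strictly negative on $\{X<\alpha\}$, this forces $a=1$ a.e.\ on $\{X>\alpha\}$ and $a=0$ a.e.\ on $\{X<\alpha\}$, which is precisely $p\le a\le q$, i.e.\ \eqref{te}.

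The main obstacle is the transfer to the commutative model and a careful identification of the equality case; once that is in place the argument becomes the elementary observation that a $[0,1]$-valued function with a prescribed integral against $X$ must be the indicator of the appropriate super-level set of $X$, with freedom only on $\{X=\alpha\}$. Noncommutativity of $\cM$ plays no role since the conditional expectation $E_\cN$ absorbs it.
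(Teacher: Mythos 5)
Your argument is correct, and it takes a noticeably more direct route than the paper's. The paper proves the two inequalities of \eqref{te} separately and argues by contradiction: assuming $E_\cN(e)E^x(\lambda(s;x),\infty)<E^x(\lambda(s;x),\infty)$, it builds a chain of trace estimates through $\int_0^s\lambda(t;x)\lambda(t;E_\cN(e)E^x(\lambda(s;x),\infty))\,dt$ and invokes the Hardy--Littlewood inequality \eqref{2.4} to contradict \eqref{ecnex}. You instead pass to a measure-space realization of the abelian algebra $\cN$ (legitimate, since $\cN$ is generated by the spectral projections of the single self-adjoint $x$ and carries a faithful normal finite trace), compute $\tau\bigl(E_\cN(e)(x-\alpha\mathbf 1)\bigr)$ in two ways, and read off both inequalities simultaneously from the pointwise equality case of the resulting ``bathtub'' estimate. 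The two proofs share the same engine --- the conditional expectation reduces everything to $\cN$, and the equality $\tau(E_\cN(e)x)=\int_0^s\lambda(t;x)\,dt$ is exploited as the saturation of a rearrangement bound --- but your version is more elementary and more transparent: it avoids \eqref{2.4} entirely, treats both bounds at once rather than ``by a similar argument,'' and makes clear that the only freedom left for $E_\cN(e)$ is on the level set $\{x=\lambda(s;x)\}$, which is exactly the content of \eqref{te}. The small steps you leave implicit (that $\lambda(\cdot;x)\equiv\alpha$ on $[\tau(p),s]$, and that $\int_{\{X>\alpha\}}(X-\alpha)\,d\mu=\int_0^{\tau(p)}(\lambda(t;x)-\alpha)\,dt$) are standard consequences of \eqref{disx} and \eqref{2.2} and are easily supplied.
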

\begin{proof}
We present the proof for  the first inequality and a similar argument yields that $E_\cN(e) \le E^x [\lambda (s;x),\infty)$.

Without loss of generality, we may assume that $E^{x}(\lambda (s;x),\infty)\ne 0$.
Since $\cN$ is a commutative algebra and $0\le E_\cN(e)\le 1 $, it follows that
$$E_\cN(e) E^x (\lambda (s;x), \infty)= E^x (\lambda (s;x), \infty)^{1/2} E_\cN(e)  E^x (\lambda (s;x), \infty) ^{1/2} \le  E^x (\lambda (s;x), \infty).  $$
 If $E_\cN(e)E^x(\lambda(s;x),\infty) = E^x(\lambda(s;x),\infty)$, then \begin{align}\label{ENE>LAMBDA}
 E_\cN(e) \ge E_\cN(e)^{1/2} E^x(\lambda(s;x),\infty)E_\cN(e)^{1/2} = E_\cN(e) E^x(\lambda(s;x),\infty) = E^x(\lambda(s;x),\infty),
 \end{align}
 which proves the first inequality of \eqref{te}.

 Now, we  assume that  $$E_\cN(e) E^x(\lambda (s;x),\infty) < E^x(\lambda (s;x),\infty) .$$
 This implies that
 \begin{align}\label{EcNe1}
  \tau( E_\cN(e) E^x(\lambda (s;x),\infty))< \tau(E^x(\lambda (s;x),\infty) ).
  \end{align}
  Since $\tau(E^x(\lambda (s;x), \infty ))\le s$,
  it follows that $\lambda (t;E_\cN(e) E^x(\lambda (s;x),\infty)  ) =0$ when $t>s$.
  Hence,
   \begin{align*}
   \int_0^s  \lambda (t;E_\cN(e) E^x(\lambda (s;x),\infty) ) dt =\tau( E_\cN(e) E^x(\lambda (s;x),\infty))&\stackrel{\eqref{EcNe1}}{<} \tau(E^x(\lambda (s;x),\infty) )\\
   &~=  \int_0^s  \lambda (t;  E^x(\lambda (s;x),\infty) )dt . \end{align*}
   In particular,
    \begin{align}\label{1-}
    \int_0^s 1- \lambda (t;E_\cN(e) E^x(\lambda (s;x),\infty) ) dt >0   .
    \end{align}
Assume that $\tau(E_\cN(e) E^x(\lambda (s;x),\infty))=a_1 \ge  0$ and $\tau(E_\cN(e) E^x(-\infty, \lambda (s;x)] )=a_2  \ge  0 $.
Observe that $a_1+a_2 =s=\tau(E_\cN(e))$.
Moreover,  by \eqref{disx}, for every $0 <t< \tau(E^x( \lambda (s;x) ,\infty ))$, we have  $\lambda (t;x) > \lambda (s;x)$. Hence,
\begin{align*}
&~\quad  \int_0^s  \lambda (t;x)dt \\
 &~ =  \int_0^s  \lambda (t;x)  \lambda (t;E_\cN(e) E^x(\lambda (s;x),\infty)  )dt+  \int_0^s  \lambda (t;x)  (1- \lambda (t; E_\cN(e) E^x(\lambda (s;x),\infty)  )dt \\
 &~ \stackrel{\eqref{1-}}{>}   \int_0^s  \lambda (t;x)  \lambda (t;E_\cN(e) E^x(\lambda (s;x),\infty)  )dt+   \lambda (s;x)  \int_0^s  (1- \lambda (t; E_\cN(e) E^x(\lambda (s;x),\infty)  )dt.
 \end{align*}
Recall that  $\int_0^s  \lambda (t;E_\cN(e) E^x(\lambda (s;x),\infty) ) dt =\tau( E_\cN(e) E^x(\lambda (s;x),\infty))$. The above inequality implies that
\begin{align*}  \int_0^s  \lambda (t;x)dt & ~>   \int_0^1  \lambda (t;x)  \lambda (t;E_\cN(e) E^x(\lambda (s;x),\infty)  )dt+     \lambda (s;x) \Big(s - \tau( E_\cN(e) E^x(\lambda (s;x),\infty)  ) \Big) \\
    &~ =  \int_0^1  \lambda \Big(t;x  \Big)  \lambda \Big(t;E_\cN(e) E^x(\lambda (s;x),\infty)  \Big)dt+     \lambda (s;x)   a_2 .
    \end{align*}
Since $ \lambda (s;x) a_2 = \tau( \lambda (s;x)E_\cN(e) E^x(-\infty, \lambda (s;x)] ) \ge  \tau( E_\cN(e) E^x(-\infty, \lambda (s;x)] x)$, it follows that
\begin{align*}  \int_0^s  \lambda (t;x)dt  &\stackrel{\eqref{2.4}}{> }    \tau \Big( xE^x(\lambda (s;x),\infty)  E_\cN(e)   \Big)   +         \tau\Big(  x E^{x}(-\infty,\lambda (s;x)]E_\cN(e)     \Big)=  \tau(xE_\cN(e)   ),
 \end{align*}
which is a contradiction with \eqref{ecnex}.
Hence, the equality $E_\cN(e)E^x (\lambda (s;x),\infty)= E^x (\lambda (s;x),\infty)$ holds, and therefore, by \eqref{ENE>LAMBDA}, we have $E_\cN(e) \ge E^x (\lambda (s;x),\infty)$.
\end{proof}

\begin{lem}\label{reduction to commutative} Let $x\in L_1(\mathcal{M},\tau)_h$. Let $0<s< \tau(\mathbf 1)=1$ and let $a $ be in the unit ball of $ \mathcal{M}_+ $  such that $\tau(a)=s$  and $\tau(xa)=\int_0^s\lambda(t;x)dt.$
If $\lambda(x)$ is not a constant in any left neighborhood of $s,$ then $a=E^ x(\lambda(s;x),\infty).$
\end{lem}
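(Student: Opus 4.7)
The plan is to reduce the problem to the commutative case by conditioning onto the abelian von Neumann algebra $\cN$ generated by the spectral projections of $x$, obtain uniqueness there via a bathtub-principle estimate, and then transfer the conclusion back to $\cM$ using faithfulness of $\tau$. Let $E_\cN\colon\cM\to\cN$ be the $\tau$-preserving normal conditional expectation and set $b:=E_\cN(a)\in\cN$. Since $E_\cN$ is unital, positive, and trace-preserving, and since $x$ is affiliated with $\cN$, one has $0\le b\le \mathbf 1$, $\tau(b)=\tau(a)=s$, and, via the $\cN$-bimodule property of $E_\cN$ applied to the bounded spectral truncations of $x$, $\tau(xb)=\tau(xa)=\int_0^s\lambda(t;x)\,dt$. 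It therefore suffices to show that in the commutative algebra $\cN$ the element $b$ must equal $p:=E^x(\lambda(s;x),\infty)$.

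Write $c:=\lambda(s;x)$. The non-constancy hypothesis first forces $\tau(p)=s$: if instead $\tau(p)=d(c;x)<s$, then since $d(u;x)>s$ for all $u<c$ the level set $\{t:\lambda(t;x)=c\}$ contains the non-degenerate interval $[\tau(p),s)$, so $\lambda(x)$ is constant on the left neighborhood $(\tau(p),s)$ of $s$, contradicting the hypothesis. With $\tau(p)=s$, \eqref{2.2} yields $\tau(xp)=\int_0^s\lambda(t;x)\,dt$. For uniqueness I would observe, working inside the commutative algebra $\cN$, that $(b-p)(x-c\mathbf 1)$ is spectrally non-positive: on $E^x(c,\infty)$ one has $p=\mathbf 1\ge b$ and $x-c>0$; on $E^x\{c\}$ one has $x-c=0$; on $E^x(-\infty,c)$ one has $p=0\le b$ and $x-c<0$. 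Its trace, however, equals $\tau(xb)-\tau(xp)-c\bigl(\tau(b)-\tau(p)\bigr)=0$, so by faithfulness $(b-p)(x-c\mathbf 1)=0$. Hence $b=p$ off $E^x\{c\}$, and the mass equality $\tau(b)=\tau(p)$ then forces $b=0=p$ on $E^x\{c\}$ as well. So $E_\cN(a)=p$.

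To close, from $E_\cN(a)=p$ the defining property of $E_\cN$ gives $\tau(ap)=\tau(E_\cN(a)p)=\tau(p)=s=\tau(a)$, so $\tau\bigl(a(\mathbf 1-p)\bigr)=0$. Then $(\mathbf 1-p)a(\mathbf 1-p)\ge 0$ has zero trace, so by faithfulness $(\mathbf 1-p)a(\mathbf 1-p)=0$; hence $a^{1/2}(\mathbf 1-p)=0$, $a(\mathbf 1-p)=0$, and $a=pap\le p$ (using $\|a\|\le 1$). Finally $p-a\ge 0$ with $\tau(p-a)=0$ gives $a=p=E^x(\lambda(s;x),\infty)$ by faithfulness once more. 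The main obstacle I expect is the commutative uniqueness step: one has to extract from the non-constancy hypothesis exactly the equality $\tau(p)=s$ and to account for any possible mass of $b$ on the atom $E^x\{c\}$ via the trace constraint, after which the passage from $E_\cN(a)=p$ back to $a=p$ is a short faithfulness argument.
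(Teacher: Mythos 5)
Your proposal is correct, and it follows the paper's overall reduction — pass to the abelian algebra $\cN$ generated by the spectral projections of $x$ via the trace-preserving conditional expectation $E_\cN$, pin down $b=E_\cN(a)$, and then recover $a$ itself from $b$ by a faithfulness argument (your final step, $\tau(a(\mathbf 1-p))=0\Rightarrow(\mathbf 1-p)a(\mathbf 1-p)=0\Rightarrow a\le p\Rightarrow a=p$, is essentially the same computation the paper performs with $E_\cN(a)$ in place of $p$). Where you genuinely diverge is the commutative uniqueness step. The paper argues via rearrangement inequalities: it uses $E_\cN(a)\prec\prec a$ and the Hardy–Littlewood pairing \eqref{2.4} applied to $y=(x-\lambda(s;x))_+$ to force $\lambda(t;y)(1-\lambda(t;E_\cN(a)))=0$ on $(0,s)$, concludes that $\lambda(E_\cN(a))=\chi_{(0,s)}$ so that $E_\cN(a)$ is a projection, and only afterwards identifies it as $E^x(\lambda(s;x),\infty)$ through the trace formula for Borel functions of $x$. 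You instead run a direct bathtub/sign argument inside the commutative algebra: $(b-p)(x-c\mathbf 1)\le 0$ piecewise on the three spectral bands, has zero trace by the two normalisations, hence vanishes, giving $b=p$ off $E^x\{c\}$ and then on $E^x\{c\}$ by mass balance. Your route is more elementary (no appeal to submajorisation of conditional expectations or to \eqref{2.4}), and you make explicit a point the paper leaves implicit, namely that the non-constancy hypothesis forces $\tau(E^x(\lambda(s;x),\infty))=s$; the paper's route, on the other hand, is the one that generalises to the companion statements (Remark \ref{posi}, Corollary \ref{corcons}) where the same rearrangement machinery is reused. The only places where you should add a line of care are (i) justifying $\tau(xb)=\tau(xa)$ for unbounded $x$ (your appeal to spectral truncations matches the paper's \eqref{exp}) and (ii) passing from $(b-p)(x-c\mathbf 1)=0$ to $(b-p)E^x(\bR\setminus\{c\})=0$, which requires inverting $x-c$ on the spectral projections $E^x(\{u:|u-c|\ge 1/n\})$ and taking a limit; both are routine.
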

\begin{proof} 
Let $\mathcal{N}$ be  the  commutative von Neumann subalgebra of $\mathcal{M}$ generated by the spectral projections of $x$. Clearly, the restriction of $\tau$ to $\mathcal{N}$ is finite.
 There exists a conditional   expectation  $E_\cN$ from $L_1(\mathcal{M},\tau)$ to $ L_1(\mathcal{N},\tau)$~\cite[Proposition 2.1]{DDPS}.
In particular, for any $z\in L_1(\cM,\tau)$, we have $ E_\cN (z) \prec\prec z  $ (see e.g. \cite[Proposition 2.1 (g)]{DDPS})
and,
$$ E_\cN (xz)  = x E_\cN(z ).  $$
Moreover, for every $z\in \cM$ and $y\in L_1(\cN,\tau)$, we have
\begin{align}\label{exp}
\tau(yz)=\tau(E _\cN (yz))=\tau(y E_\cN( z) ).
\end{align}

Since $a$ is positive, it follows that $\lambda(a)=\mu(a)$ (see Section \ref{Preliminaries} or \cite{Hiai}) and, therefore,
$$\int_0^r\lambda(t;E_\cN (a))dt\leqslant\int_0^r\lambda(t;a )dt
$$
for all $r\in(0,1)$ and, 
 $$\tau(E_\cN  (a ))\stackrel{\eqref{exp}}{=} \tau(a) .$$
Moreover, since $E_\cN$ is a contraction on $\cM$ and $\left\|a\right\|_\infty \le 1$, it follows that $\lambda(E _\cN (a ))\leqslant1$ \cite[Proposition 2.1 (g)]{DDPS}.
We set  
$y:=(x-\lambda(s;x) )_+ .$
Note that
$$x\leqslant\lambda(s;x)+y .$$
Therefore,
\begin{equation}\label{trace}
\tau(xa )\leqslant\tau(\lambda(s;x)a)+\tau(ya)=s\cdot \lambda(s;x) +\tau(ya).
\end{equation}
Since $\lambda(x)$ is a decreasing function, we have
  \begin{align}\label{yfinite}\lambda(t;y)=\left\{
                 \begin{array}{ll}
                   \lambda(t;x)-\lambda(s;x), & \hbox{if}\ 0<t<s; \\
                   0, & \hbox{if}\ s\leqslant t\leqslant1.
                 \end{array}
               \right.
               \end{align}
It follows from $\tau(a)=s$  and $\tau(xa)=\int_0^s\lambda(t;x)dt$ that
\begin{align*}
  \int_0^s\lambda(t;y)dt &\stackrel{\eqref{yfinite}}{=} \int_0^s\lambda(t;x)dt-\int_0^s\lambda( s  ;x) dt  \\
  &~= \tau(xa)- \lambda(s;x) \tau( a )  \\
  &\stackrel{\eqref{trace}}{\leqslant} \tau(ya) \\
  &\stackrel{\eqref{exp}}{=}  \tau(yE _\cN (a)) \\
  &\stackrel{\eqref{2.4}}{\leqslant} \int_0^{1}\lambda(t;y)\lambda(t;E_\cN (a ))dt \\
  &\stackrel{\eqref{yfinite}}{=} \int_0^{s}\lambda(t;y)\lambda(t;E _\cN(a))dt.
\end{align*}
Thus,
$$\int_0^{s}\lambda(t;y)(1-\lambda(t;E_\cN  (a)))dt\leqslant0.$$
Since $y$ is positive and $\lambda(E_\cN(a))\leqslant1,$ we conclude that $\lambda(t;y)(1-\lambda(t;E_\cN(a)))\geqslant0.$
Hence,  $\lambda(t;y)(1-\lambda(t;E_\cN(a)))=0$ for all $t\in(0,s).$
Recall that $\lambda(x)$ is not a constant in any left neighborhood of $s$. We obtain that  $\lambda (y)>0$ on $(0,s)$.
Recall that $E_\cN(a)\ge 0$ with $\tau(E_\cN(a) )=\tau(a) =s$.
 We obtain that  $\lambda (E_\cN(a))=1$ on $(0,s]$ and $\lambda (E_\cN(a))=0$ on $[s,1)$.
This implies that
$E_\cN(a)$ is a projection in $\cN$.
Hence, $E_\cN(a)= E_\cN(a) E_\cN(a) =E_\cN(a\cdot E_\cN(a))$ and $E_\cN(a( {\bf 1}-E_\cN(a)))=0$.
It follows that $$\tau(a^{1/2} ({\bf 1}-E_\cN(a)) a^{1/2} ) =\tau(a ({\bf 1}-E_\cN (a))  )=\tau(E_\cN(a( {\bf 1} -E_\cN(a)))  ) =0  .$$
Therefore, $a^{1/2}({\bf 1} -E_\cN(a))a^{1/2} =0$ and $a^{1/2} =E_\cN (a) a^{1/2} $.
By the assumption that  $a\le 1$, we have
$$ E_\cN(a)= E_\cN(a)^2   \ge   E_\cN(a)a E_\cN(a)   =   a . $$
Recall that $\tau(E_\cN(a))=\tau(a)$.
Hence, $\tau(E_\cN(a)-a )=0$.
Due to the faithfulness of the trace $\tau$, we obtain that $a = E_\cN(a )  \in \cP(\cN)$.
Since  $\mathcal{N}$ is a commutative von Neumann subalgebra of $\mathcal{M}$ generated by    all spectral  projections  of $x$,
 it follows that $a= E^{x}\{B\}$ for some Borel set $B\subset [0,1]$.
 By \cite[Proposition 1]{P} and the assumption that $\tau(ax)=\int_0^s \lambda (t;x)dt$, we have
$$\int_0^s \lambda (t;x)dt =\tau(a x )=\int_0^1 \chi_{B}(\lambda(t;x))dt .$$ Moreover, since  $\lambda$ is decreasing and is non-constant  in any left neighborhood of $s$, it follows  that $B=(\lambda(s;x),\infty)$. 
\end{proof}

\begin{rmk}\label{posi}
Let $x\in L_1(\cM,\tau)_+$.
By \eqref{2.4},  for any $a$ in the unit ball of $\cM _+$ with $\tau(a)< s$, we have
$$\tau(xa) \le \int_0^1 \lambda (t;x) \lambda (t;a)dt.$$
Since $\lambda (x)$ is a non-negative decreasing function, $0\le \lambda (a)\le 1 $ and $\tau(a)=\int_0^1 \lambda(t;a)dt <s$, it follows that
\begin{align}\label{<<<}
\tau(xa) \le \int_0^1 \lambda (t;x) \lambda (t;a)dt <  \int_0^s  \lambda (t;x)dt  .
\end{align}
Then, by \eqref{Hineq}, we obtain that
$$\int_0^s   \lambda (t;x)dt =\sup \{\tau(xa);a\in \cM, 0\le a\le 1, \tau(a)\le s\}.$$
Lemma \ref{reduction to commutative} together with \eqref{<<<} implies that  if  $a $ is  in the unit ball of $ \mathcal{M}_+ $  such that $\tau(a)\le  s$  and $\tau(xa)=\int_0^s\lambda(t;x)dt,$
and
$\lambda(x)$ is not a constant in any left neighborhood of $s,$ then $a=E^ x(\lambda(s;x),\infty).$
\end{rmk}

\begin{cor}\label{corcons} Let $x\in L_1(\mathcal{M},\tau)_h $. Let $0<s< \tau(\mathbf 1)=1 $ and let $e \in  \cP(\mathcal{M})$ be  such that $\tau(e )=s$ and $\tau(xe)=\int_0^s\lambda(t;x)dt.$
Then,
$$E^{x }(\lambda (s;x   ),\infty ) \le  e \le E^{x }[\lambda (s;x ),\infty ).$$
\end{cor}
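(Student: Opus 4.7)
The plan is to argue directly by spectral decomposition of $\mathbf 1$ at the level $c:=\lambda(s;x)$, rather than trying to reduce to Lemma~\ref{reduction to commutative}, since under the present hypotheses $\lambda(x)$ may very well be constant in a left neighborhood of $s$.

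First I would set $q_-:=E^x(c,\infty)$, $q_0:=E^x\{c\}$, $p:=E^x(-\infty,c)$, so that $\mathbf 1=q_-+q_0+p$. From the relationship between $d(x)$ and $\lambda(x)$, together with $\lambda(s;x)=c$, one obtains $\tau(q_-)\le s\le \tau(q_-)+\tau(q_0)$ and $\lambda(t;x)=c$ for all $t\in[\tau(q_-),s]$. Applying \eqref{spectral scale} to the positive operator $(x-c\mathbf 1)_+=(x-c)q_-\in L_1(\cM,\tau)_+$, whose spectral scale is $(\lambda(\cdot;x)-c)_+$, and then adding back $c\tau(q_-)$, one arrives at the key identity
$$\int_0^s\lambda(t;x)\,dt=\tau(xq_-)+c(s-\tau(q_-)).$$

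Next I would split $\tau(xe)=\tau(xq_-e)+\tau(xq_0e)+\tau(xpe)$, rewrite $\tau(xq_-e)=\tau(xq_-)-\tau(xq_-e^\perp)$, and use the three operator (in)equalities $(x-c)q_-\ge 0$, $xq_0=cq_0$, and $(c-x)p\ge 0$. Since $\tau(Af)\ge 0$ whenever $A\ge 0$ and $f$ is a projection, these yield
$$\tau(xe)\le \tau(xq_-)-c\tau(q_-e^\perp)+c\tau(q_0e)+c\tau(pe).$$
Using $\tau(e)=s$ and $\tau(q_-e^\perp)=\tau(q_-)-\tau(q_-e)$, the $c$-terms telescope to $c(s-\tau(q_-))$, so the right-hand side equals exactly $\int_0^s\lambda(t;x)\,dt$. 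The hypothesis then forces equality in both uses of $(x-c)q_-\ge 0$ and $(c-x)p\ge 0$, giving
$$\tau\bigl((x-c)q_-e^\perp\bigr)=0\quad\text{and}\quad\tau\bigl((c-x)pe\bigr)=0.$$

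Finally, the positive operator $T:=(x-c)q_-\in L_1(\cM,\tau)_+$ has support projection equal to $q_-$: its kernel on $q_-\cH$ is trivial because the eigenspace of $x$ at $c$ is $q_0\cH$, which is orthogonal to $q_-\cH$. The identity $\tau(Te^\perp)=\|e^\perp T^{1/2}\|_2^2$, combined with the faithfulness of $\tau$, gives $Te^\perp=0$, hence $e^\perp\le q_-^\perp$, i.e.\ $q_-\le e$. A symmetric argument, applied to $(c-x)p$ whose support projection is $p$, yields $pe=0$, i.e.\ $e\le q_-+q_0=E^x[\lambda(s;x),\infty)$. Combining the two inclusions delivers the desired sandwich. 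The main obstacle is the careful bookkeeping that produces the identity $\tau(xe)=\int_0^s\lambda(t;x)\,dt-\tau((x-c)q_-e^\perp)-\tau((c-x)pe)$, and pinning down the support projections of $(x-c)q_-$ and $(c-x)p$ correctly so the faithfulness argument yields the inclusions in the right direction.
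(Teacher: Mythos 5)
Your argument is correct, and it is a genuinely different (and more direct) route than the paper's. The paper first proves a stronger statement for arbitrary positive contractions $a$ with $\tau(a)=s$ attaining the supremum in \eqref{Hineq} (Lemma~\ref{reduction to commutative}, valid only when $\lambda(x)$ is non-constant to the left of $s$), via the conditional expectation $E_\cN$ onto the algebra generated by the spectral projections of $x$ and the sandwich \eqref{te}; the corollary is then obtained by splitting off the constancy interval and running a second, rather involved computation with $E_\cN(e)$ and the positive part $x_1=(x-\lambda)E^x(\lambda,\infty)$. You instead exploit that $e$ is a projection from the start: decomposing $\mathbf 1=q_-+q_0+p$ at the level $c=\lambda(s;x)$, establishing the identity $\int_0^s\lambda(t;x)\,dt=\tau(xq_-)+c(s-\tau(q_-))$ (which follows from $\lambda(t;(x-c)_+)=(\lambda(t;x)-c)_+$ and \eqref{spectral scale}), and then reading the hypothesis as the equality case of $\tau(xe)\le\int_0^s\lambda(t;x)\,dt-\tau((x-c)q_-e^\perp)-\tau((c-x)pe)$. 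The equality forces $\tau((x-c)q_-e^\perp)=\tau((c-x)pe)=0$, and since the supports of the positive operators $(x-c)q_-$ and $(c-x)p$ are exactly $q_-$ and $p$ (the $c$-eigenspace being carried by $q_0$), faithfulness of $\tau$ yields $q_-\le e$ and $ep=0$, i.e.\ the desired sandwich; all steps, including the traciality manipulations with possibly unbounded $T^{1/2}e^\perp\in L_2(\cM,\tau)$, are legitimate. What your approach buys is a shorter, self-contained proof that treats the constant and non-constant cases uniformly and avoids conditional expectations entirely; what it does not replace is the paper's Lemma~\ref{reduction to commutative} itself, whose general-contraction form is used elsewhere (Remark~\ref{posi}, Proposition~\ref{noncomm KPS lemma}), so your argument is a clean alternative proof of the corollary but not of the surrounding machinery.
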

\begin{proof}
By Lemma \ref{reduction to commutative}, it suffices to prove the case when $ \lambda (x)$ is a constant on a left neighbourhood of $s$.
Denote by $\cN$ the von Neumann algebra generated by all spectral projections of $x $.
Let $E_\cN $ be a conditional expectation from $L_1(\cM,\tau)$ onto $ L_1(\cN,\tau)$.
Let $\lambda := \lambda (s;x)$ and  $x_1: =(x-\lambda )E^x (\lambda,\infty)$.
Recall that $E^x[ \lambda,\infty) \ge E _\cN (e)\ge E^x(\lambda,\infty)$ (see \eqref{te}).
In particular, $E_\cN(e)E^x(\lambda,\infty) =E^x(\lambda,\infty) $.
Observing that $E^x(\lambda,\infty)$ is the support of $x_1$, we have
\begin{align*}
\tau(x_1  E^x (\lambda,\infty) e E^x (\lambda,\infty)) &~=\tau(x_1 e  )=\tau( x_1  E_\cN(  e) )\\
&~=\tau( x_1 E^x (\lambda,\infty) E_\cN( e)) \\
&~=\tau( x_1 E^x (\lambda,\infty)  ) \\
&~=\tau( (x-\lambda ) E^x (\lambda,\infty)  ) \\
&~=\tau( x E^x (\lambda,\infty)  ) -\tau( \lambda E^x (\lambda,\infty)  ) \\
&~=\int_0^{\tau(E^x (\lambda,\infty))} (\lambda (t;x E^x (\lambda,\infty) ) -\lambda )dt\\
&\stackrel{\eqref{2.2}}{=}\int_0^{\tau(E^x (\lambda,\infty))} (\lambda (t;x) -\lambda )dt\\
&~=\int_0^{\tau(E^{x_1} (0,\infty))} \lambda (t;x_1)dt.
\end{align*}
Note that $0\le E^x(\lambda,\infty   )eE^x(\lambda,\infty )\le \textbf{1}$ and $\tau(E^x(\lambda,\infty   )eE^x(\lambda,\infty )) \le \tau(E^x(\lambda,\infty   ) ) $.
Since $x_1\ge 0$, it follows from  Remark \ref{posi} that
$$E^x (\lambda,\infty) e E^x (\lambda,\infty) =E^{x_1} (0,\infty)=E^{x } (\lambda ,\infty)  .$$
That is,
$e \ge E^x (\lambda,\infty) $.
Let $e_1: = e -E^{x}(\lambda,\infty)\in \cP(\cM)$.
We have $$\tau(x e) =\tau(x ( E^{x}(\lambda,\infty)  + E^{x}(-\infty,\lambda]) e  )=\tau(x E^{x}(\lambda,\infty)  +x E^{x}(-\infty,\lambda]e_1).$$
Hence, by the  assumption that $\lambda=\lambda(s;x)$, we obtain that
\begin{align*}
\tau(x E^{x}(-\infty,\lambda]e_1)& = \int_{0}^s \lambda (t;x)dt -\int_0^{\tau(E^{x}(\lambda,\infty))} \lambda (t;x)dt\\
&=\int_{\tau(E^{x}(\lambda,\infty))}^s \lambda (t;x)dt  =\tau(\lambda e_1) .
\end{align*}
We have
$$ \tau(e_1 (\lambda  - x E^{x}(-\infty,\lambda ])e_1) =0.$$
Therefore, $ e_1 (\lambda - x E^{x}(-\infty,\lambda])e_1=0  $.
Since $\lambda - x E^{x}(-\infty,\lambda]\ge 0$, it follows that
$(\lambda - x E^{x}(-\infty,\lambda])^{1/2}e_1=0$.
Hence,  $$ \left(\int_{t <\lambda  } (\lambda -t) dE_t^x + \int_{t > \lambda  } \lambda  dE_t^x \right) e_1 =(\lambda - x E^{x}(-\infty,\lambda])e_1  = 0$$
and
\begin{align*}
E^x  \Big( (-\infty,\lambda)\cup (\lambda ,\infty)  \Big) \cdot e_1 = \left(\int_{t <\lambda  } \frac{1}{\lambda -t} dE_t^x + \int_{t > \lambda  } \frac1\lambda    dE_t^x \right) \left(\int_{t <\lambda  } (\lambda -t) dE_t^x + \int_{t > \lambda  } \lambda  dE_t^x \right) e_1 = 0.  
\end{align*}
This  implies that  $ e_1\le E^x\{\lambda (s;x)\} $, which completes the proof.
\end{proof}

The following proposition is similar to a well-known property of rearrangements of functions, see \cite[property $9^0$, p. 65]{KPS} and \cite[Theorem 3.5]{Hiai}.

\begin{prop}\label{noncomm KPS lemma} Let $x,\ x_1,\ x_2\in L_1(\mathcal{M},\tau)_h$ be such that $x=(x_1+x_2)/2$ and $\lambda(x_1)=\lambda(x_2)=\lambda(x).$
Then, $x=x_1=x_2$.
\end{prop}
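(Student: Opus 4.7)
The plan is to combine the majorisation inequality \eqref{2.4} with the characterisation of saturating projections in Corollary~\ref{corcons}, apply it at every level where $\lambda(x)$ is strictly decreasing, and then extend to all levels by a brief distributional argument.

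First I would fix $s\in(0,1)$ at which $\lambda(x)$ is strictly decreasing, equivalently $\tau(E^x(\lambda(s;x),\infty))=s$. Set $e:=E^x(\lambda(s;x),\infty)$, so $\tau(e)=s$ and $e$ commutes with $x$. Identity \eqref{2.2} gives $\tau(xe)=\int_0^s\lambda(t;x)dt$. Applying \eqref{2.4} with $\lambda(e)=\chi_{[0,s)}$, one has
\[ \tau(x_ie)\le\int_0^1\lambda(t;x_i)\lambda(t;e)dt=\int_0^s\lambda(t;x)dt=\tau(xe),\quad i=1,2,\]
using $\lambda(x_i)=\lambda(x)$. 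Combined with $\tau(xe)=\frac12(\tau(x_1e)+\tau(x_2e))$, this forces both inequalities to be equalities, so $\tau(x_ie)=\int_0^s\lambda(t;x_i)dt$ for $i=1,2$.

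Since $\lambda(x_i)=\lambda(x)$ is also strictly decreasing at $s$, we have $\tau(E^{x_i}(\lambda(s;x_i),\infty))=s=\tau(e)$. Corollary~\ref{corcons} applied to $x_i$ and $e$ gives $E^{x_i}(\lambda(s;x),\infty)\le e\le E^{x_i}[\lambda(s;x),\infty)$, and the equality of traces forces $E^{x_i}(\lambda(s;x),\infty)=e=E^x(\lambda(s;x),\infty)$ for $i=1,2$.

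To upgrade this to $x_1=x$ (and symmetrically $x_2=x$), I would establish $E^{x_1}(c,\infty)=E^x(c,\infty)$ for every $c\in\mathbb R$. Fix $c$ and set $s:=\tau(E^x(c,\infty))$. If $s=0$, then $\lambda(x_1)=\lambda(x)$ yields $\tau(E^{x_1}(c,\infty))=0$ too, hence both projections vanish by faithfulness of $\tau$. If $s>0$, the definition $\lambda(s;x)=\inf\{t:d(t;x)\le s\}$ together with right-continuity of $d$ shows $\lambda(s;x)\le c$ and $\tau(E^x(\lambda(s;x),\infty))=s$; thus $\lambda(x)$ is strictly decreasing at $s$, and since $E^x(c,\infty)\le E^x(\lambda(s;x),\infty)$ with equal traces, they coincide. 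The preceding paragraph then gives $E^{x_1}(c,\infty)\le E^{x_1}(\lambda(s;x),\infty)=E^x(\lambda(s;x),\infty)=E^x(c,\infty)$, and the trace identity again forces equality. The spectral theorem concludes $x_1=x$.

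The main obstacle is the bookkeeping around flat regions of $\lambda(x)$: Corollary~\ref{corcons} cleanly identifies the saturating projection only when $\lambda(x)$ is strictly decreasing at the chosen $s$, so every spectral level $c$ must be reduced to such an $s$. This reduction is handled in the last step through the distributional identities relating $\lambda$ and $d$. Once in place, the argument is a direct assembly of \eqref{2.4} and Corollary~\ref{corcons}.
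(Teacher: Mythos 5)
Your argument is correct and follows essentially the same route as the paper: saturate the Hardy--Littlewood trace inequality at the spectral projection $e=E^x(\lambda(s;x),\infty)$ at points $s$ where $\lambda(x)$ is non-constant on the left, use the rigidity of saturating projections (you via Corollary~\ref{corcons} plus faithfulness of the trace, the paper via Lemma~\ref{reduction to commutative}) to identify $e$ as the corresponding spectral projection of $x_1$ and $x_2$, and conclude via the spectral theorem. The only real difference is bookkeeping: the paper parametrizes by spectral values $\theta\in(\lambda(1^-;x),\lambda(0;x))$ and then needs separate limiting and left-support arguments for the endpoint projections, whereas your reduction of an arbitrary level $c$ to $s=d(c;x)$ treats all levels uniformly (just add a word on the trivial case $s=1$, where both projections equal $\mathbf 1$ by faithfulness).
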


\begin{proof}
Fix $\theta \in (\lambda(1^-;x) , \lambda (0,x) )  $.
Define $s$ by setting
$$s := \min \{ 0\le v\le 1 :\lambda (v;x) \le  \theta \}. $$
If $s>0$,  then $$ \lambda(s;x) \le \theta <   \lambda(s-\varepsilon;x), ~\forall \varepsilon\in (0,s). $$
This means that $\lambda(x)$ is not constant in any left neighborhood of $s$ whenever $s>0$.

Fix a projection $e=E^{x_1+x_2}(\lambda(s; x_1+x_2),\infty)=E^{2x}(\lambda(s; 2x ),\infty)=E^{x}(\lambda(s; x ),\infty)\in\mathcal{M}.$ Clearly, $\tau(e)=s$.  
By \eqref{Hineq}, we have
\begin{align*}
 \tau(e(x_1+x_2)) &~= \int_0^{s}\lambda(t;x_1+x_2)dt\\
&~= \int_0^{s}2 \lambda(t;x)dt\\
 &~= \int_0^{s}\lambda(t;x_1)dt+\int_0^{s}\lambda(t;x_2)dt\\
 &\stackrel{\eqref{Hineq}}{\geq}  \tau(ex_1)+\tau(ex_2)\\
 &~= \tau(e(x_1+x_2)).
\end{align*}
Hence,  $\int_0^{s}\lambda(t;x_i)dt =\tau(ex_i),\; i=1,2$.
 From Lemma \ref{reduction to commutative}, we obtain that $e=E^{x}(\lambda(s; x ),\infty) =E^{x_1}(\lambda(s;x_1),\infty)=E^{x_2}(\lambda(s;x_2),\infty).$
Hence,
we have
\begin{align}\label{x012}
E^x(\theta,\infty) = E^x(\lambda(s;x),\infty)& = E^{x_1}(\lambda (s;x_1),\infty)= E^{x_2}(\lambda (s;x_2),\infty)\nonumber\\
&= E^{x_1}(\theta,\infty)=E^ {x_2}(\theta,\infty).
\end{align}
Note that (for convenience, we denote $E^x [\infty,\infty )=E^{x_1}[\infty,\infty )=E^{x_2} [\infty,\infty )=0$)
\begin{align*} E^x [\lambda(0;x),\infty ) & ~= {\bf 1}- E^x (-\infty, \lambda(0;x) ) ={\bf 1} -\lim_{\theta \uparrow  \lambda(0;x)^-}E^x (-\infty, \theta  ] \\
&\stackrel{\eqref{x012}}{=} {\bf 1}-\lim_{\theta \uparrow  \lambda(0;x)^-}E^{x_1} (-\infty, \theta  ]= {\bf 1}- E^{x_1} (-\infty, \lambda(0;x_1) ) =E^{x_1} [\lambda(0;x_1 ),\infty ) \\
&\stackrel{\eqref{x012}}{=}  {\bf 1}-\lim_{\theta \uparrow  \lambda(0;x)^-}E^{x_2} (-\infty, \theta  ]= {\bf 1} - E^{x_2} (-\infty, \lambda(0;x_2 ) ) =E^{x_2} [\lambda(0;x_2),\infty ),
\end{align*}
which together with \eqref{x012} implies that
$$ x E^x (\lambda(1^-;x ),\infty ) = x_1 E^{x_1} (\lambda(1^- ;x_1),\infty )= x_2 E^{x_2} (\lambda(1^-;x_2),\infty )  . $$
We note by $l(z)$ the left support of $z\in S(\cM,\tau)$.
Since $\lambda(1^-;x)=\lambda(1^-;x_1)=\lambda(1^-;x_2)$ and $E^{x}\{\lambda(1^-;x)\} = {\bf 1}-   l  ((x-\lambda(1^-;x))_+ ) =  {\bf 1}-  l  ((x_1-\lambda(1^-;x_1))_+ )= E^{x_1}\{\lambda(1^-;x_1)\} = {\bf 1}-  l  ((x_2-\lambda(1^-;x_2))_+ )=  E^{x_2}\{\lambda(1^-;x_2)\}$,
 it follows that $$x=x_1=x_2.$$
\end{proof}

The following lemma provides the proof of the implication ``$\Leftarrow$'' in Theorem \ref{RyffNonCom}.
\begin{lem}\label{lemma:ex}
Let $y\in L_1(\cM,\tau)$.
Let  $x,x_1,x_2\in \Omega (y)$ with $x=\frac{x_1+x_2}{2}$.
If $x$ satisfies that  for   every $t\in (0,1)$, one of the followings holds:
 \begin{enumerate}
  \item[(1).]   $\lambda(t;y)=\lambda(t;x)$;
  \item [(2).]  $\lambda(t;y) \ne \lambda(t;x)$ with $ E^x \{\lambda (t;x )  \} $ is an atom and $$\int_{ \{s : \lambda (s;x)=\lambda (t;x )\}}  \lambda (s;y)ds    =  \lambda(t ;x)     \tau(E^x (\{\lambda (t;x)\}))  ,   $$
\end{enumerate}
 then $x_1=x_2=x$.
\end{lem}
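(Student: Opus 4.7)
The plan is to show $\lambda(x_1)=\lambda(x_2)=\lambda(x)$ and then invoke Proposition~\ref{noncomm KPS lemma} to conclude $x_1=x_2=x$. Write $F(s):=\int_0^s\lambda(x),\ G(s):=\int_0^s\lambda(y),\ F_i(s):=\int_0^s\lambda(x_i)$. I first record the inequalities $F_i\le G$ (from $x_i\in\Omega(y)$), $2F\le F_1+F_2$ (from $2x=x_1+x_2$ together with \eqref{tri}), $F\le G$, and $F(1)=G(1)$. On the tightness set $E:=\{s\in[0,1]:F(s)=G(s)\}$, the chain $2F=2G\ge F_1+F_2\ge 2F$ collapses and forces $F_1=F_2=F=G$ on $E$.

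Next, enumerate the constancy intervals $I_k=[a_k,b_k)$ of $\lambda(x)$ which meet $S_{\ne}:=\{t:\lambda(t;x)\ne\lambda(t;y)\}$. On each such $I_k$ the hypothesis yields $\lambda(x)\equiv c_k$, an atom $e_k:=E^x\{c_k\}$ of trace $b_k-a_k$, and $\int_{I_k}\lambda(y)=c_k|I_k|$. Outside $J:=\bigcup_kI_k$ one has $\lambda(x)=\lambda(y)$ pointwise, because any $t\notin J$ lying in $S_{\ne}$ would by hypothesis be contained in an atomic constancy interval of $\lambda(x)$ and hence in $J$; combined with the per-$I_k$ balance this gives $E^c\cap(0,1)=\bigsqcup_k(a_k,b_k)$, so $J^c\subseteq E$ and every $a_k,b_k$ lies in $E$. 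Since $F_i-G$ is absolutely continuous and vanishes on $J^c$, its derivative vanishes a.e.~on $J^c$, yielding $\lambda(x_i)=\lambda(y)=\lambda(x)$ a.e.~on $J^c$.

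The crux is to show $\lambda(x_i)\equiv c_k$ on each $I_k$. Set $p_k':=E^x(c_k,\infty)$ and $p_k:=E^x[c_k,\infty)=p_k'+e_k$, of traces $a_k$ and $b_k$; both saturate the supremum in \eqref{Hineq} for $x$. Combining $\tau(x_ip_k')\le\int_0^{a_k}\lambda(x_i)=\int_0^{a_k}\lambda(x)=\tau(xp_k')$ with $\tau(x_1p_k')+\tau(x_2p_k')=2\tau(xp_k')$ (and analogously at $b_k$) forces $\tau(x_ip_k')=\int_0^{a_k}\lambda(x_i)$ and $\tau(x_ip_k)=\int_0^{b_k}\lambda(x_i)$; subtracting yields $\tau(x_ie_k)=c_k\tau(e_k)$, so $e_kx_ie_k=c_ke_k$. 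Corollary~\ref{corcons} then sandwiches $E^{x_i}(\beta_i,\infty)\le p_k'\le E^{x_i}[\beta_i,\infty)$ and $E^{x_i}(\gamma_i,\infty)\le p_k\le E^{x_i}[\gamma_i,\infty)$ where $\beta_i:=\lambda(a_k;x_i)$ and $\gamma_i:=\lambda(b_k;x_i)$. If $\beta_i>\gamma_i$, subtracting the sandwiches produces $e_k\ge E^{x_i}(\gamma_i,\beta_i)$, and the atomicity of $e_k$ leaves exactly two possibilities. Either $E^{x_i}(\gamma_i,\beta_i)=0$, in which case $e_k$ splits as an orthogonal sum of sub-projections of $E^{x_i}\{\beta_i\}$ and $E^{x_i}\{\gamma_i\}$, and minimality of $e_k$ combined with $e_kx_ie_k=c_ke_k$ forces $\beta_i=c_k$ or $\gamma_i=c_k$; or $E^{x_i}(\gamma_i,\beta_i)=e_k$, in which case $e_k$ is itself a spectral projection of $x_i$ with unique eigenvalue $c_k$, and computing $\lambda(a_k;x_i)$ via the right-continuous decreasing rearrangement then forces $\beta_i=c_k$. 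Each alternative contradicts $\beta_i>c_k>\gamma_i$, so $\beta_i=\gamma_i=c_k$ and hence $\lambda(x_i)\equiv c_k$ on $I_k$.

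Combining the previous two paragraphs gives $\lambda(x_1)=\lambda(x_2)=\lambda(x)$ a.e., hence everywhere by right-continuity of the spectral scales, and Proposition~\ref{noncomm KPS lemma} then delivers $x_1=x_2=x$. I expect the main obstacle to be the final atomicity case analysis: the averaging identities produce the spectral sandwiches and the diagonal relation $e_kx_ie_k=c_ke_k$ cheaply, but closing the delicate subcase $E^{x_i}(\gamma_i,\beta_i)=e_k$ requires careful bookkeeping of where $c_k$ sits inside the spectral decomposition of $x_i$ and genuinely uses the fact that $\lambda(a_k;x_i)$ is computed via the right-continuous rearrangement.
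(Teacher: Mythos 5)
Your proposal is correct and follows essentially the same route as the paper: the squeeze $2F\le F_1+F_2\le 2G$ at the endpoints of the atomic constancy intervals, Corollary~\ref{corcons} to sandwich the spectral projections of $x_i$ between those of $x$, constancy of $\lambda(x_i)$ on each such interval from minimality of the atom, identification of the constant via the integral balance, and finally Proposition~\ref{noncomm KPS lemma}. The only divergence is cosmetic: where you run the somewhat muddled $\beta_i,\gamma_i$ case analysis, the paper simply notes that $e_k$ commutes with $x_i$, so $x_ie_k=c_ke_k$ and \eqref{2.2}--\eqref{2.3} give $\lambda(\cdot\,;x_i)\equiv c_k$ on $[a_k,b_k)$ directly.
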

\begin{proof}
By the definition of $x$ and assumption (2) above, for every $t$   such that $\lambda (t;x) =\lambda(t; y)$,
we have $\int_0^t \lambda (s;x)ds = \int_0^t \lambda (s;y)ds$.
For any $t$ such that $\lambda(t;y) \ne \lambda (t;x)$, we denote $[t_1,t_2) =\{s : \lambda (s;x) =\lambda (t;x )\}$, $t_1<t_2$.
In particular, we have
$$\int_0^{t_1}\lambda (s;y)ds =\int_0^{t_1}\lambda (s;x)ds $$
and
$$\int_0^{t_2}\lambda (s;y)ds =\int_0^{t_2 }\lambda (s;x)ds .$$
Since   $x_1,x_2\in \Omega(y)$ and
$$  \int_0^{t_1} \lambda (s;2x)ds = \int_0^{t_1} \lambda (s;x_1+x_2)ds\stackrel{\eqref{tri}}{\le}  \int_0^{t_1} \lambda (s;x_1)ds+\int_0^{t_1} \lambda (s;x_2)ds \le 2 \int_0^{t_1}\lambda (s;y)ds, $$
it follows that
\begin{align}\label{***}  \int_0^{t_1} \lambda (s;x)ds =   \int_0^{t_1} \lambda (s;x_1)ds= \int_0^{t_1} \lambda (s;x_2)ds = \int_0^{t_1}\lambda (s;y)ds.  \end{align}
The same argument with $t_1$ replaced with $t_2$ yields that
\begin{align}\label{****}  \int_0^{t_2} \lambda (s;x)ds =   \int_0^{t_2} \lambda (s;x_1)ds= \int_0^{t_2} \lambda (s;x_2)ds = \int_0^{t_2} \lambda (s;y)ds.  \end{align}
Let $e_1 := E^{x}(\lambda (t_1 ;x),\infty)$ and  $e_2:= E^{x}[ \lambda ( t_1 ;x),\infty)  $.
In particular, $e_2-e_1 =E^x\{\lambda (t_1;x)\}$.
Observe that $\tau(e_1)=t_1$ and $\tau(e_2)=t_2$ (due to the assumption that $[t_1,t_2)=\{s : \lambda (s;x) =\lambda (t;x )\} $).
Note that, by \eqref{Hineq} and the definition of spectral scales $\lambda (x)$, we have
\begin{align*}
 2 \int_0^{t_1} \lambda (s;x)ds & = \tau(2x e_1) =\tau((x_1+x_2)e_1) =\tau(x_1e_1)+\tau(x_2e_1 )\\
  &\stackrel{\eqref{Hineq}}{\le}  \int_0^{t_1} \lambda (s;x_1)ds+\int_0^{t_1} \lambda (s;x_2)ds =2\int_0^{t_1}  \lambda (s;x)ds .
 \end{align*}
 We obtain that $\tau(x_1e_1)=\int_0^{t_1} \lambda (s;x_1)ds
=\int_0^{t_1} \lambda (s;x_2)ds = \tau(x_2e_1 )   $.
By Corollary \ref{corcons},
 we have
 $$ E^{x_1}(\lambda (t_1;x_1),\infty ) \le  e_1  \le E^{x_1}[\lambda (t_1;x_1),\infty )$$ and
 $$ E^{x_2}(\lambda (t_1 ;x_2),\infty ) \le  e_1  \le E^{x_2}[\lambda (t_1;x_2),\infty ) .$$
 Similar argument   with $t_1$ replaced with $t_2$ yields that
  $$ E^{x_1}(\lambda (t_2;x_1),\infty ) \le  e_2 \le E^{x_1}[\lambda (t_2;x_1),\infty )$$
  and
   $$ E^{x_2}(\lambda (t_2;x_2),\infty ) \le  e_2  \le E^{x_2}[\lambda (t_2 ;x_2),\infty ) .$$
 In particular, $e_1=E^{x_1}(\lambda (t_1;x_1),\infty )  + q$ for some subprojection $q$ of $E^{x_1}\{\lambda (t_1;x_1)\}$.
 Since $q$ commutes with  $E^{x_1}\{\lambda (t_1;x_1)\}$, it follows that $q $ commutes with any spectral projection of $x_1$. Hence, $e_1$ commutes with $x_1$.
   The same argument implies that both
   $e_1$ and $e_2$ commute with $x_1$ and with  $x_2$. 
Moreover,
 the atom $  e:= e_2-e_1\in \cP(\cM)$ satisfies that
 $$ e   \le E^{x_1} [\lambda (t_2;x_1), \lambda (t_1;x_1)) \mbox{ and } e\le  E^{x_2} [\lambda (t_2;x_2),\lambda (t_1;x_2) ) . $$
By the spectral theorem, 
 $ \lambda (x_1e_1) =\lambda (x_1)$  and $ \lambda (x_2e_1) =\lambda (x_2)$ on $(0,t_1)$ (see  \eqref{2.2}).
 On the other hand,
 $$\lambda (t; x_1e ) = \lambda (t; x_1e_2 e ) \stackrel{\eqref{2.3}}{=} \lambda (t+t_1; x_1e_2) \stackrel{\eqref{2.2}}{=}\lambda (t+t_1; x_1)$$
 and $$\lambda (t; x_2e ) = \lambda (t; x_2e_2 e ) \stackrel{\eqref{2.3}}{=}  \lambda (t+t_1; x_2e_2 ) \stackrel{\eqref{2.2}}{=} \lambda (t+t_1; x_2) $$
 for all $t\in [0,t_2-t_1)$.
 Since $e $ is an atom, it follows that $\lambda _1 :=\lambda (t; x_1e ) = \lambda (t+t_1; x_1)$
 and $\lambda _2 :=\lambda (t; x_2e ) = \lambda (t+t_1; x_2)$ for every $t\in [0,t_2-t_1)$.
Indeed, combining \eqref{***} and \eqref{****}, we have
 $$  \int_{t_1}^{t_2} \lambda (s;x)ds =    \int_{t_1}^{t_2} \lambda (s;x_1)ds =\lambda _1 (t_2-t_1)=  \int_{t_1}^{t_2} \lambda (s;x_2)ds =  \lambda _2  (t_2-t_1)= \int_{t_1}^{t_2} \lambda (s;y)ds. $$
Hence,  $\lambda_1 =\lambda _2 = \lambda (t_1 ;x) $.
That is,  $\lambda (x_1)=\lambda (x_2) =\lambda (x) $ on $[t_1,t_2)$.
Since $t$ is arbitrary,
 it follows that  $\lambda (x_1) =\lambda (x_2)=\lambda (x)$.
  By Proposition \ref{noncomm KPS lemma}, we obtain that $x_1=x_2=x$.
\end{proof}



\section{Proof of the main result}
Now, we prove the main result of the present paper.

\begin{proof*}[of Theorem \ref{RyffNonCom}]
``$ \Rightarrow$''.  Assume that $x\in \text{extr}(\Omega(y))$.
We assert that for   every $t\in (0,1)$, one of the followings holds:
\begin{enumerate}
  \item[(1).]   $\lambda(t;y)=\lambda(t;x)$;
  \item [(2).]  $\lambda(t;y) \ne \lambda(t;x)$ with $ E^x \{\lambda (t;x)  \} $ an  atom and $$\int_{ \{s: \lambda (s;x)=\lambda (t;x)\}}  \lambda (s;y)ds    =  \lambda(t ;x)     \tau(E^x \{\lambda (t;x)\})  .  $$
\end{enumerate}

We set
$$A=\left\{s:\int_0^s\lambda (t;y)-\lambda (t; x)dt>0\right\}.$$
Since $\lambda (y),\lambda (x)\in L_1(0,1)$, it follows that the mapping $f: s\mapsto \int_0^s\lambda (t;y)-\lambda (t;x)dt$ is continuous.
Moreover, noting that  $f(0)=f(1) =0$,
  we infer that $A$ is an open set, i.e.,  $A=\cup_i(a_i,b_i)$, where
$a_i,b_i\not\in A$.  
By Lemma \ref{counter example 2}, $\lambda(x)$ is a step function on $(a_i,b_i )$.

We assert that   \begin{align}\label{a_idisc}
\lambda (a_i^-;x) \ne \lambda (a_i+\varepsilon ;x)
 \end{align}
 for any $\varepsilon>0$.
Note that $\lambda (a_i^-; y)\le   \lambda (a_i^-;x)$ (indeed, if $\lambda (a_i^-; y)>  \lambda (a_i^-;x)$, then, by $y\succ x$, we obtain that   $\int_0^{a_i}\lambda (t;y)-\lambda (t;x) dt >0$, which is a contradiction with $a_i\notin A$).
Assume by contradiction that \eqref{a_idisc} does not hold, that is,
 $\lambda (a_i^-; x) =\lambda (a_i+\varepsilon;x)$ for some $\varepsilon\in (0, b_i-a_i)$.
  Then, $ \lambda (a_i^-;y) \le \lambda (a_i^-; x)=\lambda (a_i+\varepsilon;x)$.
Since $\lambda(y)$ is decreasing, it follows that
$\int_{a_i}^{a_i+\varepsilon } \lambda (t;y)-\lambda (t;x)dt \le 0$.
However, since $a_i\notin A$ and $a_i +\varepsilon\in A$, it follows that  $\int_0^{a_i} \lambda (t;y)-\lambda (t;x)dt =0$ and   $\int_{a_i}^{a_i+\varepsilon } \lambda (t;y)-\lambda (t;x)dt >0$, which is a contradiction.
%

For a given $\varepsilon>0$, we claim  that  $\lambda (b_i-\varepsilon_1;y ) \le  \lambda (b_i-\varepsilon_1; x )$ for some $\varepsilon_1\in (0,\varepsilon)$.
Assume by contradiction that $\lambda (b_i-\varepsilon_1;y ) >   \lambda (b_i-\varepsilon_1; x )$ for all $\varepsilon_1\in (0,\varepsilon)$. Then, by $x\prec  y$, we obtain that
$\int_0^{b_i}\lambda (t;y)-\lambda (t;x) dt >0$.
 That is, $b_i\in A$, which is a contradiction with the assumption.
We assert that
\begin{align}\label{bdis}
\lambda (b_i-\varepsilon;x) > \lambda (b_i;x)
\end{align}
 for any $\varepsilon>0$.
Assume by contradiction  that $\lambda (b_i-\varepsilon; x ) = \lambda (b_i;x)$ for some $\varepsilon\in (0, b_i-a_i )$.
Then, $\lambda (b_i-\varepsilon_1;y ) \le  \lambda (b_i-\varepsilon; x ) = \lambda (b_i-\varepsilon_1; x ) = \lambda (b_i;x)  $ for some $\varepsilon_1 \in (0,\varepsilon)$.
However,
since $x\in \Omega (y)$ and $b_i \notin A$, it follows that $\int_{b_i}^{b_i+\delta} \lambda(t;y)-\lambda(t;x)dt\ge 0 $ for any $\delta>0$.
By the right-continuity of spectral scales, we obtain that
$$ \lambda (b_i ; y)\ge \lambda (b_i;x).$$
That is, $\lambda (b_i-\varepsilon_1;y ) \ge  \lambda (b_i ; y)\ge \lambda (b_i;x) =\lambda (b_i-\varepsilon_1;x) \ge \lambda (b_i-\varepsilon_1;y )$, which is a contradiction with  the assumption that $b_i -\varepsilon_1 \in A $.

 Fix an index $i$, we will consider the step function $\lambda (x)$ on  $(a_i,b_i)$ and construct operators $x_\pm$ such that $2x=x_+ + x_-$.

\emph{Case 1.}
Suppose that  $\lambda (x) $ takes three or more values on $(a_i,b_i)$.
Hence, there exist $a_i <s_1<s_2<s_3\le b_i $ such that $\lambda (x)|_{[s_1,s_2)}= C_1$ and  $\lambda (x)|_{[s_2,s_3)}= C_2$ with $\lambda (s_3;x ) < C_2$, $C_1>C_2$.
Let
$$u: = E^{x}\{ C_1 \}-  \frac{s_2-s_1}{s_3-s_2} \cdot E^{x}\{ C_2 \} ,$$
and $$x_\pm := x \pm \delta u . $$
It is clear that $2x =x_+ +x_-$ and
$$\tau(x)= \tau(y) =\tau(x_\pm).$$
We assert that there exists sufficiently small $\delta$ such that
$$ x _\pm \prec y. $$
Assume that $\delta>0$ satisfies that
 \begin{align}\label{delta1}
 \delta < \lambda (s_1^{ - }; x)-\lambda (s_1;x)=\lambda (s_1^-; x)-C_1,
 \end{align}
 \begin{align}\label{delta2}
 \frac{s_2-s_1}{s_3-s_2}\delta \stackrel{\eqref{bdis}}{<}\lambda (s_3^{ - };x)-\lambda (s_3;x)=C_2 -\lambda (s_3;x)
 \end{align}
and
 \begin{align}\label{delta3}
 \delta +\frac{s_2-s_1}{s_3-s_2}\delta   <   C_1-C_2  .
 \end{align}
Note  that $x$ can be represented in the form of  $$\int_{\lambda (s_1^-; x)} ^\infty t dE^{x}_t +C_1E^x\{C_1\} +C_2E^x\{C_2\} +\int_{-\infty} ^{\lambda (s_3;x)} t dE^{x}_t.$$
Recalling that $u=  E^{x}\{ C_1 \}-  \frac{s_2-s_1}{s_3-s_2} \cdot E^{x}\{ C_2 \} $ and $x_\pm = x \pm \delta u$, we have
$$x_{\pm}=\int_{\lambda (s_1^-; x)} ^\infty t dE^{x}_t +(C_1
\pm \delta)E^x\{C_1\} +(C_2 \mp\frac{s_2-s_1}{s_3-s_2}\delta )E^x\{C_2\} +\int_{-\infty} ^{\lambda (s_3;x)} t dE^{x}_t .$$
By the definition of $\delta$, we obtain that $\lambda (s_1^-; x)\stackrel{\eqref{delta1}}{>} C_1
\pm \delta \stackrel{\eqref{delta3}}{>}  C_2\mp \frac{s_2-s_1}{s_3-s_2}\delta \stackrel{\eqref{delta2}}{>} \lambda (s_3;x)$.
Hence,
$$\lambda ( x_\pm )  =\lambda (x) \pm \left(\delta \chi_{[s_1,s_2)}-  \delta \frac{s_2-s_1}{s_3-s_2}   \chi_{[s_2,s_3)} \right).  $$
In particular,  for every $s\notin [s_1,s_3]$, we have
$$\int_0^s \lambda (t; x_\pm) dt \le \int_0^s \lambda (t;y) dt . $$
Recall that $f: s\mapsto \int_0^s\lambda (t;y)-\lambda (t;x)dt $ is continuous
and
 so there exists
$ s'\in [s_1, s_2]  $ such that $$  \min _{s\in[s_1,s_2]}\int_0^s \lambda (t;y ) -\lambda (t;x)dt =  f(s') >0 .$$
Hence,
for all $s\in[s_1,s_2]$, there exists a  sufficiently small $\delta$ such that
\begin{align*}
&\quad  \int_0^s \lambda(t; y)-\lambda(t; x _{\pm})dt \\
& =   \int_0^s \lambda (t;y)-\lambda (t;x)dt \pm  \int_0^s \left(\delta \chi_{[s_1,s_2)}-  \delta \frac{s_2-s_1}{s_3-s_2}   \chi_{[s_2,s_3)} \right) ds \\
&\ge \min_{s\in[s_1,s_2]}\int_0^s  \lambda (t;y)-\lambda (t;x) dt \pm  \int_0^s \left(\delta \chi_{[s_1,s_2)}-  \delta \frac{s_2-s_1}{s_3-s_2}   \chi_{[s_2,s_3)} \right) ds \\
&\ge \min  _{s\in[s_1,s_2]}\int_0^s  \lambda (t;y)-\lambda (t;x) dt  -  2\delta \cdot  (s_2-s_1) \\
&   = f(s')  - 2\delta\cdot  (s_2-s_1)   > 0 .
\end{align*}
On the other hand, 
since $\int_0^{s_3}\lambda (x) \ge \int_0^{s_3}\lambda (x) =\int_0^{s_3} \lambda (x_\pm)$, $t\mapsto \int_0^t\lambda (y)$ is a concave function and $\lambda (x_\pm)$ is a constant on $(s_2,s_3)$, it follows 
 that $x_\pm \prec y$.
Hence, $x$ is not an extreme point of $\Omega(y)$, which  is a contradiction.

\emph{Case 2.}
Suppose that $\lambda(x) $ takes only two   values  on $(a_i,b_i)$.
 Hence, there exist $a_i <s_0 < b_i $ such that $\lambda (x)|_{[a_i,s_0)}= C_1$ and  $\lambda (x)|_{[s_0, b_i  )}= C_2$, $C_1>C_2$.
 Let $C' _1 := \frac{1}{s_0-a_i} \int_{a_i}^{s_0} \lambda (t;y)dt$.
 By the assumption that $ \int_{a_i}^{s } \lambda (t;x )dt <\int_{a_i }^s \lambda (t;y)dt$, $s\in ({a_i},b_i )$, we have $C'_1 > C_1$.
 Let $C_2' := \frac{1 }{b_i-s_0}\int_{s_0}^{b_i}  \lambda (t;y) dt  $.
Recall that $\int_{a_i}^{b_i}\lambda(t;y)-\lambda (t;x)dt=0$ and $  \int_{a_i }^{s_0 } \lambda (t;y)-\lambda (t;x )dt >0$.
  We obtain that $\int_{s_0}^{b_i}\lambda(t;y)-\lambda (t;x)dt=\int_{a_i}^{b_i}\lambda(t;y)-\lambda (t;x)dt - \int_{a_i }^{s_0 } \lambda (t;y)-\lambda (t;x )dt <0$.  Hence,
 $$C_2' = \frac{1 }{b_i-s_0}\int_{s_0}^{b_i}  \lambda (t;y) dt   <\frac{1 }{b_i-s_0}\int_{s_0}^{b_i}  \lambda (t;x) dt =C_2.$$
 Let $\delta >0$ be such that  $$\delta < \min\{ C_1'-C_1 , (C_1-C_2)\frac{b_i-s_0}{b_i-a_i} ,  (C_2-C_2' ) \frac{ b_i -s_0} {s_0-a_i} , \lambda(a_i^-;x)-C_1, (C_2-\lambda(b_i;x))\frac{b_i-s_0}{s_0-a_i} \}.   $$
Set $$u:=E^x\{ C_1\}-  \frac{s_0-a_i}{ b_i -s_0} \cdot   E^x \{ C_2\}$$
  and $x_\pm := x \pm  \delta u$.
Note  that $\lambda (x)$ can be represented in the form of  $$\int_{C_1^+} ^\infty t dE^{x}_t +C_1E^x\{C_1\} +C_2E^x\{C_2\} +\int_{-\infty} ^{C_2^- } t dE^{x}_t.$$
Therefore,
$$x_{\pm}=\int_{\lambda(a_i^-;x)} ^\infty t dE^{x}_t +(C_1
\pm \delta)E^x\{C_1\} +(C_2 \mp   \frac{s_0-a_i}{ b_i -s_0} \delta )E^x\{C_2\} +\int_{-\infty} ^{\lambda(b_i;x)  }   t dE^{x}_t .$$
By the definition of $\delta$, we obtain that $\lambda (a_i^-; x)\ge C_1
\pm \delta >  C_2\mp  \frac{s_0-a_i}{ b_i -s_0}\delta \ge \lambda (b_i;x)$.
Hence,
 $$  \lambda (x_\pm) = \lambda (x) \pm \left( \delta \chi_{[a_i ,s_0)} -\delta \frac{s_0-a_i}{ b_i -s_0}  \chi_{[s_0,b_i )} \right)$$
 and
  $\int_0^1 \lambda ( t;x_{\pm}) dt = \int_0^1 \lambda (t; x) dt = \int_0^1 \lambda (t; y   ) dt  $.
Recall that every partial averaging operator $T$\footnote{Let $\cA=\{A_k\subset (0,1)\}$ be a finite or infinite sequence of disjoint sets  of positive  measure. Denote by $A_\infty$ the complement of $\cup_k A_k$. The partial averaging operator is defined by (see \cite[p. 198]{SZ} or \cite[Chapter 1.2.12]{Zanin})
$$ P(x\mid \cA) =\sum_k \frac{1}{m(A_k)} \left( \int_{A_k} x(s) ds\right)\chi_{A_k} +x \chi_{A_\infty}, ~x\in L_1(0,1).   $$ }
 is a doubly stochastic operator (see e.g. \cite[Chapter 1.2.12]{Zanin} or \cite[Section 2]{Chong}), i.e., $T f \prec f$, $f\in L_1(0,1)$ (see \cite{R} for definition of doubly stochastic operators).
Hence,  for every $s\in [a_i ,s_0]$, we have
  $$\int_{a_i}^s \lambda (t;x_{\pm})dt  = \int_{a_i}^s \Big(C_1  \pm \delta \Big) dt< \int_{a_i}^s C_1' dt  \le  \int_{a_i}^s \lambda (t;y )dt. $$
Similarly, for every $s\in [s_0,b_i ]$, we have
\begin{align*}
\int_s^{b_i}  \lambda(t;y) dt & \le \int_s^{b_i}  C_2' dt
  <  \int_s^{b_i}   \Big( C_2 -\frac{s_0-a_i}{ b_i -s_0} \delta  \Big)  dt \le  \int_s^{b_i}   \Big( C_2 \mp \frac{s_0-a_i}{ b_i -s_0} \delta  \Big)  dt = \int_s^{b_i}   \lambda (t;x _\pm)dt \end{align*}
and therefore, for every $s\in [s_0,b_i ]$, we have
\begin{align*}
 \int_{a_i}^s  \lambda(t;y)-\lambda(t; x_{\pm }) dt  &= -  \int_s^{b_i}  \lambda(t;y)-\lambda(t; x _{\pm }) dt  >0. 
\end{align*}
Hence, $x_\pm \prec y $ and thus  $x$ is not an extreme point of $\Omega(y),$ which  is a contradiction. 

\emph{Case 3. }
Now, we consider the case when  $\lambda (x)|_{(a_i,b_i )}=C.$
Assume that $E^x\{C\}$ is not a minimal projection.
Let $E^{x} \{C\} =p_1+p_2$, $p_1\perp p_2  \in \cP(\cM )$.
Let $\delta>0$.
Set $$u:= p_1-   \frac{\tau(p_1)}{\tau(p_2)} p_2 $$  and
$$x_{\pm}:=x \pm  \delta u.$$
It is clear that $\int_0^1 \lambda (t;x_{\pm}) dt = \int_0^1 \lambda (t; x) dt = \int_0^1 \lambda(t;y) dt  $. 
The same argument in Case 2 yields that there exists  sufficiently small $\delta$ such that $x_\pm  \prec y$.
That is, $x$ is not an extreme point of $\Omega (y)$. Hence, $E^x\{C\}$ is a minimal projection in $
\cM$.
Moreover, by the assumption that $a_i,b_i\notin A$, we have
$\int_0^{a_i}\lambda (t;y) - \lambda(t;x)dt=0 =\int_0^{b_i}\lambda (t;y) - \lambda(t;x)dt$. Hence,
$\int_{a_i}^{b_i}\lambda (t;x)dt=\int_{a_i}^{b_i}\lambda (t;y)dt$.
which completes the proof.

``$\Leftarrow$''.
The converse implication follows from Lemma \ref{lemma:ex}.
%
%
\end{proof*}



\begin{proof*}[of Corollary \ref{corsh}]
Let $\cM$
 be a type II$_1$ factor (resp. $\cM=M_n(\mathbb{C})$) and $\cA$ be a diffuse abelian von Neumann subalgebra of $\cM$  (resp. $\cA$ is the  diagonal masa in $M_n (\mathbb{C})$).
 The first equality in Corollary
\ref{corsh} has been established in \cite{AM} (and the Schur-Horn theorem \cite{Schur,Horn}, see also \cite[(1.1)]{AM}).
Since $\cA$ is diffuse (resp. the diagonal masa in $M_n(\mathbb{C})$), it follows that for any $y\in \cM_h$, there exists $z\in \cA_h$ such that $\lambda (z)=\lambda (y)$.
 Applying  Theorem \ref{RyffNonCom}, we obtain that 
 $$ \extr\{x\in \cA_h : x\prec z\} =\{x\in \cA_h : \lambda (x)=\lambda (z)\} . $$
Therefore,
we have
$$ \extr(\{ x\in \cA_h : x\prec y \})
=\extr(\{ x\in \cA_h : x\prec z \})
= \{  x\in \cA_h : \lambda (x)
=\lambda (z)
=\lambda(y)\},$$
which establishes the second equality.
\end{proof*}

{\bf Acknowledgements}
The first author supported by the grant (No. AP08052004 and No. AP08051978) of the Science Committee of the Ministry of Education and Science of the Republic of Kazakhstan.
The second author acknowledges the support of University International Postgraduate Award (UIPA).
The   third   author was supported by the Australian Research Council  (FL170100052).

The authors would like to thank Thomas Schekter and Dmitriy Zanin for helpful discussions
 and thank the reviewers for numerous useful comments and suggestions.

\affiliationone{
D. Dauitbek\\
Abai Kazakh National Pedagogical University, 050010, Almaty, Kazakhstan;\\
Al-Farabi Kazakh National University, 050040, Almaty, Kazakhstan;\\
Institute of Mathematics and Mathematical Modeling, 050010, Almaty, Kazakhstan.
  \email{dostilek.dauitbek@gmail.com}\\}
\affiliationtwo{
   J. Huang and  F. Sukochev\\
  School of Mathematics and Statistics, University of New South Wales, Kensington, 2052, NSW, \\
   Australia
   \email{jinghao.huang@student.unsw.edu.au\\
   f.sukochev@unsw.edu.au}}

\end{document}